\documentclass[a4paper]{amsart}

\pdfoutput=1

\usepackage[utf8]{inputenc}
\usepackage[T1]{fontenc}
\usepackage{lmodern}
\usepackage{amsthm, amssymb, amsmath, amsfonts, mathrsfs,upgreek}
\usepackage[colorlinks=true, pdfstartview=FitV, linkcolor=blue, citecolor=blue, urlcolor=blue,pagebackref=false]{hyperref}




\usepackage{microtype}



\newtheorem{thm}{Theorem}[section]
\newtheorem{prop}[thm]{Proposition}
\newtheorem{lem}[thm]{Lemma}
\newtheorem{cor}[thm]{Corollary}
\theoremstyle{remark}
\newtheorem{rem}[thm]{Remark}
\newtheorem{obs}[thm]{Observation}

\renewcommand{\le}{\leqslant}
\renewcommand{\ge}{\geqslant}
\renewcommand{\subset}{\subseteq}
\renewcommand{\emptyset}{\varnothing}
\newcommand{\mcl}{\mathcal}

\newcommand{\Ll}{\left}
\newcommand{\Rr}{\right}

\newcommand{\1}{\mathbf{1}}
\newcommand{\N}{\mathbb{N}}
\newcommand{\R}{\mathbb{R}}

\newcommand{\Z}{\mathbb{Z}}
\newcommand{\ov}{\overline}

\newcommand{\td}{\tilde}
\newcommand{\eps}{\varepsilon}
\renewcommand{\epsilon}{\varepsilon}

\renewcommand{\P}{\mathbb{P}}

\newcommand{\T}{\mathbb{T}}
\newcommand{\CT}{\mathbb{CT}}
\newcommand{\X}{\mathcal{W}}

\numberwithin{equation}{section}

\title[Phase transition of the contact process on random regular graphs]{Phase transition of the contact process on random regular graphs}
\author{Jean-Christophe Mourrat, Daniel Valesin}

\address[Jean-Christophe Mourrat]{ENS Lyon, CNRS, 46 allée d'Italie, 69007 Lyon, France}

\address[Daniel Valesin]{University of British Columbia, 1984 Mathematics Road, Vancouver BC, Canada V6T 1Z2}

\begin{document}

\begin{abstract}

We consider the contact process with infection rate $\lambda$ on a random $(d+1)$-regular graph with $n$ vertices, $G_n$. We study the extinction time $\uptau_{G_n}$ (that is, the random amount of time until the infection disappears) as $n$ is taken to infinity. We establish a phase transition depending on whether $\lambda$ is smaller or larger than $\lambda_1(\T^d)$, the lower critical value for the contact process on the infinite, $(d+1)$-regular tree: if $\lambda < \lambda_1(\T^d)$, $\uptau_{G_n}$ grows logarithmically with $n$, while if $\lambda > \lambda_1(\T^d)$, it grows exponentially with $n$. This result differs from the situation where, instead of $G_n$, the contact process is considered on the $d$-ary tree of finite height, since in this case, the transition is known to happen instead at the \emph{upper} critical value for the contact process on $\T^d$. 

\bigskip

\noindent \textsc{MSC 2010: 82C22; 05C80} 

\medskip

\noindent \textsc{Keywords: interacting particle system, contact process, random graph, configuration model} 

\end{abstract}
\maketitle
%
%
%
%
%
%
%
%
\section{Introduction}
\label{s:intro}
Let $G = (V, E)$ be a locally finite and connected graph. 
The contact process with parameter $\lambda > 0$ on a graph $G = (V, E)$ is a continuous-time Markov process on $\{0,1\}^V$ with generator $\mathcal{L}$ defined, for any local function $f: \{0,1\}^V \to \R$ and any configuration $\xi \in \{0,1\}^V$, by
$$(\mathcal{L}f)(\xi) = \sum_{x \in V} \Ll[f(\xi^{0\to x}) - f(\xi)\Rr] + \lambda \sum_{x,y\in V}\Ll[\1_{\{\xi(x) = 1\}} \cdot n(x,y) \cdot (f(\xi^{1 \to y}) - f(\xi)) \Rr],$$
where
$$\xi^{i \to x}(z) = \begin{cases} i &\text{if } z = x;\\\xi(z)&\text{otherwise},\end{cases}$$
$\1$ is the indicator function, and $n(x,y)$ is the number of edges between $x$ and $y$ (which we allow to be larger than 1).

Vertices of the graph are interpreted as individuals and states 0 and 1 indicate that an individual is healthy or infected, respectively. The transitions that appear in the definition of $\mathcal{L}$ are then understood as follows: $\xi \to \xi^{0 \to x}$ means that there is a \textit{recovery} at $x$, and $\xi \to \xi^{1 \to x}$ means that there is a \textit{transmission} to $x$ (this is necessarily triggered by a neighbour $y \sim x$ with $\xi(y) = 1)$. We refer the reader to \cite{lig99} for a thorough introduction to the contact process, including the facts that we state without proof in this introduction.

We often abuse notation and, for a set $S$, treat an element $\xi \in \{0,1\}^S$ as the same as the set $\{x\in S: \xi(x) = 1\}$; in particular, the configuration identically equal to zero is denoted by $\varnothing$. We denote by $(\xi^A_t)_{t \geq 0}$ the contact process on $G$ with initial configuration $\xi^A_0 \equiv A$. If $A = \{x\}$, we write $(\xi^x_t)$ instead of $(\xi^{\{x\}}_t)$. We write $P_{G,\lambda}$ for a probability measure under which the contact process with parameter $\lambda$ on $G$ is defined.

The \textit{extinction time} $\uptau^A_G$ for the contact process on $G = (V,E)$ with initial configuration $A \subset V$ is defined by
$$\uptau^A_G = \inf\{t: \xi^A_t = \varnothing\}.$$
Since the dynamics only allows for new infections to appear by transmission, the configuration $\varnothing$ is absorbing, so we have $\xi^A_t = \varnothing$ for any $t \geq \uptau^A_G$. We write $\uptau_G$ instead of $\uptau^G_G$ and $\uptau^x_G$ instead of $\uptau^{\{x\}}_G$.

When the contact process is considered on infinite graphs, a central question is whether we have survival or extinction of the infection. For a graph $G = (V, E)$, a finite set $A \subset V$ and $\lambda > 0$, define
$$\begin{aligned}&p_{G,A,\lambda}^{\text{ext}}= P_{G,\lambda}\Ll[\uptau_{G}^A < \infty\Rr]= P_{G,\lambda}\Ll[\exists t_0: \xi^A_t = \varnothing \text{ for all } t \geq t_0\Rr] ,\\
&p_{G,A,\lambda}^{\text{loc ext}} =  P_{G,\lambda}\Ll[\exists t_0: \xi^A_t \cap A = \varnothing \text{ for all } t \geq t_0\Rr],\end{aligned}$$
the probabilities of \textit{extinction} and \textit{local extinction} of the contact process with parameter $\lambda$ on $G$ started from $\xi^A_0 \equiv A$. It can be shown that 
we either have $p^{\text{ext}}_{G,A,\lambda} = 1$ for all $A$ or $p^{\text{ext}}_{G,A,\lambda} < 1$ for all $A$; likewise, either $p^{\text{loc ext}}_{G,A,\lambda} = 1$ for all $A$ or $p^{\text{loc ext}}_{G,A,\lambda} < 1$ for all $A$. We say that the contact process
\begin{itemize}
\item \textit{dies out} if $p^{\text{ext}}_{G,A,\lambda} = 1$ for all $A$;
\item \textit{survives weakly} (or \textit{globally but not locally}) if $p^{\text{ext}}_{G,A,\lambda} < 1$ and $p^{\text{loc ext}}_{G,A,\lambda} = 1$ for all $A$;
\item \textit{survives strongly} (or \textit{locally}) if $p^{\text{loc ext}}_{G,A,\lambda} < 1$ for all $A$.
\end{itemize}
We let $\lambda_1(G) = \sup\{\lambda: p^{\text{ext}}_{G,A,\lambda} = 1\}$ and $\lambda_2(G) = \sup\{\lambda: p^{\text{loc ext}}_{G,A,\lambda} = 1\}$. It is well known that for $G = \Z^d$,  $0< \lambda_1(\Z^d) = \lambda_2(\Z^d) =:\lambda_c(\Z^d) < \infty$, whereas for $G = \T^d$, the infinite, $(d+1)$-regular tree (with $d \geq 2$), the situation is quite different, as we then have $0<\lambda_1(\T^d) < \lambda_2(\T^d) <\infty$. In the latter case, if we take $\lambda$ in the weak survival regime and start the process from a single infection at a vertex $x$, the infection has a chance of surviving, but it can only do so by propagating outwards from $x$; any finite neighbourhood of $x$ only carries the infection for a finite amount of time, as required in the definition of weak survival.

The contact process on finite graphs (deterministic or random) has also been the subject of much investigation; below we will survey some of the past work that is most relevant to the object of interest of this paper. Let us start noting that for finite graphs, there is no question of survival or extinction: if $G$ is finite, the infection almost surely disappears in finite time. One is thus interested in the behaviour of the infection before it disappears. The typical course of action goes as follows: we fix $\lambda$ and the parameters that define the graph, then take a size parameter (e.g.\ the number of vertices) to infinity, so as to obtain a sequence of graphs $G_n$, and consider the asymptotic behaviour of some random quantity $X_n$ associated with the contact process on $G_n$. Common choices for $X_n$ are the extinction time $\uptau_{G_n}$ and the average proportion of infected vertices before $\uptau_{G_n}$. Of particular interest are the cases in which the asymptotic behaviour of $X_n$ depends sensitively on the choice of $\lambda$ and the set of parameters that define the graph; this dependency can sometimes be related to a phase transition of the contact process on an infinite graph that is in some sense approximated by $G_n$.

This project has been first carried out for $G_n$ equal to the subgraph of $\Z^d$ induced by the vertices contained in a box of side length $n$; see 
\cite{eulalia,schonmeta,durliu,chencp,dursc,tommeta,tomexp}, or \cite[Section I.3]{lig99} for an overview. As suggested in the previous paragraph, the behaviour of $\uptau_{G_n}$ exhibits a phase transition that mimics the phase transition of the contact process on $\Z^d$: $\uptau_{G_n}$ grows logarithmically or exponentially with the volume of $G_n$ respectively if $\lambda$ is taken smaller or larger than $\lambda_c(\Z^d)$.


Let us now turn to finite trees. Let $\T^d_\ell$ denote the $d$-ary tree of height $\ell$. In other words, $\T^d_\ell$ is a tree with a distinguished vertex $o$ (called the root) so that $o$ has degree $d$, all vertices at distance between 1 and $\ell-1$ from $o$ have degree $d+1$ and all vertices at distance $\ell$ from $o$ have degree 1. In \cite{St} and \cite{cmmv13}, the contact process on ${\T}^d_\ell$ was studied. In \cite{St}, bounds were obtained on $\uptau_{{\T}^d_\ell}$ for different values of $\lambda$, and these bounds were improved in \cite{cmmv13} to yield the result below. We denote by $|A|$ the cardinality of a set $A$.

\begin{thm} \cite{cmmv13} 
\label{t:cut-trees}
Assume $d \geq 3$.
\begin{itemize}
\item[(a)] For any $\lambda \in (0,\; \lambda_2(\T^d))$, there exists $c \in (0,\infty)$ such that, as $\ell \to \infty$, $$\frac{\uptau_{ {\T}^d_{\ell}}}{\log | {\T}^d_\ell|} \to c \text{ in probability.}$$
\item[(b)] For any $\lambda \in (\lambda_2(\T^d),\infty)$, there exists $C \in (0,\infty)$ such that, as $\ell \to \infty$,
$$\frac{\log \uptau_{ {\T}^d_\ell}}{|{\T}^d_\ell|} \to C \text{ in probability.}$$
Moreover, $\uptau_{ {\T}^d_\ell}$ divided by its expectation converges in distribution to the exponential distribution with parameter 1.
\end{itemize}
\end{thm}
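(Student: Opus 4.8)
I treat the two regimes separately, as they rest on different mechanisms.

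\emph{Part (b), the regime $\lambda>\lambda_2(\T^d)$.} The plan is to prove $\log\uptau_{\T^d_\ell}\asymp|\T^d_\ell|$ with probability tending to $1$ by two matching one-sided bounds, then upgrade to the sharp statement. For the lower bound I would use a renormalization (block) construction: since $\lambda>\lambda_2(\T^d)$ the contact process on $\T^d$ survives strongly, so one can fix a radius $R$ and a time $T$ such that, for a vertex $v$ whose ball $B(v,2R)$ is isomorphic to a ball in $\T^d$, the process started from $B(v,R)$ fully infected, with probability at least some $\rho>0$, at time $T$ refills $B(v,R)$ and also fills $B(w,R)$ for each of the neighbouring balls $w$ at distance $R$ from $v$. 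The vertices of $\T^d_\ell$ at distance at most $\ell-2R$ from $o$ — there are $\Theta(|\T^d_\ell|)$ of them — carry such balls, and since deleting edges only suppresses transmissions, this dominates the contact process on $\T^d_\ell$ by a supercritical ``infected‑block'' process on a bounded‑degree graph with $\Theta(|\T^d_\ell|)$ sites; a standard comparison with supercritical oriented percolation then shows that this process, hence the contact process, survives for time $\exp(\Omega(|\T^d_\ell|))$ with high probability. The matching upper bound is soft and holds for any graph $G$ on $N$ vertices: in a unit time interval every recovery clock rings and no transmission arrow appears, an event of probability at least $e^{-c_\lambda N}$ on which the process empties, so $E[\uptau_G]\le e^{c_\lambda N}$ and $\uptau_{\T^d_\ell}\le\exp((c_\lambda+\eps)|\T^d_\ell|)$ with high probability. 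To promote the two bounds to convergence of $|\T^d_\ell|^{-1}\log\uptau_{\T^d_\ell}$ to a single constant $C$ together with the asymptotic exponential law, I would run the usual metastability scheme: show that from a single infected vertex the process either dies within time $o(E[\uptau_{\T^d_\ell}])$ or reaches a ``metastable'' configuration (positive density of infected sites in a large region) whose rescaled residual extinction time is asymptotically insensitive to the configuration; a restart argument then forces the limit law to be memoryless, hence exponential, and pins down $C$.

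\emph{Part (a), the regime $\lambda<\lambda_2(\T^d)$.} The lower bound $\uptau_{\T^d_\ell}\ge(1-\eps)\log|\T^d_\ell|$ with high probability holds for every $\lambda>0$ and is essentially free: $\T^d_\ell$ contains an independent set $I$ with $|I|\ge|\T^d_\ell|/(d+1)$ (for instance the vertices at even distance from $o$), and since deleting edges only suppresses transmissions, the contact process on $\T^d_\ell$ started from all sites infected dominates the contact process on the edgeless graph on $I$ started from $I$ infected, whose extinction time is the maximum of $|I|$ i.i.d.\ $\mathrm{Exp}(1)$ variables and hence equals $\log|I|=\log|\T^d_\ell|-O(1)$ up to fluctuations of order $1$. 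So the substance of (a) is the \emph{upper} bound: $\uptau_{\T^d_\ell}\le C\log|\T^d_\ell|$, i.e.\ extinction within time $O(\ell)$, with high probability. A clean argument handles small $\lambda$: if $\lambda<\tfrac1{2\sqrt d}$ (a lower bound for $\lambda_1(\T^d)$), then with the weight $w(x)=d^{-|x|/2}$, $|x|$ being the distance from $o$, the functional $M_t=\sum_{x\in\xi_t}w(x)$ has drift at most $(2\lambda\sqrt d-1)M_t$, so $E[M_t]\le M_0\,e^{-\delta t}$ with $\delta=1-2\lambda\sqrt d>0$; since $M_0\le|\T^d_\ell|$ and any nonempty configuration has $M_t\ge d^{-\ell/2}$, Markov's inequality gives $P[\uptau_{\T^d_\ell}>t]\le|\T^d_\ell|\,d^{\ell/2}e^{-\delta t}$, which is $o(1)$ for $t\ge C_\lambda\ell$.

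The real difficulty is the range $\tfrac1{2\sqrt d}\le\lambda<\lambda_2(\T^d)$, where no global supermartingale weight is available — indeed for $\lambda\ge\lambda_1(\T^d)$ the infection even survives globally on $\T^d$ — so the only reason fast extinction should hold on the finite tree is the \emph{local} extinction property of the contact process on $\T^d$, which must be used quantitatively. The plan is: couple the process on $\T^d_\ell$ with the (dominating) process on $\T^d$; use that for $\lambda<\lambda_2(\T^d)$ the return probabilities $P_{\T^d,\lambda}[x\in\xi^x_t]$ decay exponentially in $t$, uniformly in $x$; and combine this, via self-duality and a union bound over the $\Theta(d^\ell)$ vertices (which gives $P[\uptau_{\T^d_\ell}>t]\le\sum_x P[\uptau^x_{\T^d_\ell}>t]$), with the geometric fact that the infection is pushed to within $o(\ell)$ of the leaf set within time $O(\ell)$ — after which, the leaves having degree $1$, the outward drift that sustains the process on $\T^d$ is no longer available and the exponential‑return estimate takes over, yielding $P[\uptau^x_{\T^d_\ell}>t]\le C\exp(-c(t-C'\ell))$ uniformly in $x$ and $\ell$, so that $t=C''\ell$ suffices for a large enough $C''$. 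Establishing this last bound — in particular the claim that the infection cannot linger in any shell of fixed depth near the leaves, which is really a finite‑range‑dependence / renormalization statement built on the sub‑$\lambda_2$ behaviour of the process on $\T^d$ — is the main obstacle and is where the bulk of the work for part (a) lies; once $\uptau_{\T^d_\ell}=\Theta(\log|\T^d_\ell|)$ with high probability is in hand, the existence of the precise constant $c$ follows from a subadditivity argument along the levels of the tree.
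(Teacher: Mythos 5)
This theorem is not proved in the paper at all: it is quoted from \cite{cmmv13}, so there is no internal proof to compare your proposal against, and it has to be judged as a standalone argument. As such, it falls short of the statement, most seriously in part (a). You yourself flag the gap: for $\lambda\in[\tfrac{1}{2\sqrt d},\lambda_2(\T^d))$ you offer only a plan (``the infection is pushed to within $o(\ell)$ of the leaves, after which the exponential-return estimate takes over'') and admit that making this precise is the bulk of the work; that is indeed a genuine missing step, not a routine one. Moreover, the premise on which you base that detour --- ``no global supermartingale weight is available'' beyond $\lambda<\tfrac1{2\sqrt d}$ --- is misleading. The weighted quantity $\sum_{x}d^{-|x|/2}\,P_{\T,\lambda}[x\in\xi^o_t]$ decays exponentially in $t$ throughout the weak-survival phase: this is essentially how $\lambda_2(\T^d)$ is characterized via the function $\phi_\lambda(\rho)$ of Liggett, Section~I.4 (for $\lambda<\lambda_2$ one can choose $\rho$ with $\phi_\lambda(\rho)<1$). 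Since every vertex of $\T^d_\ell$ has weight at least $d^{-\ell/2}$, your own Markov-inequality argument for small $\lambda$, with this expected-weight decay in place of the crude generator drift bound, already gives $P[\uptau^x_{\T^d_\ell}>t]\le C d^{\ell/2}e^{-ct}$, and a union bound over the $\Theta(d^\ell)$ starting points yields the $O(\ell)$ upper bound in the whole range $\lambda<\lambda_2(\T^d)$. In other words, the clean mechanism you dismissed is the one that actually works up to $\lambda_2$, once the nontrivial input about $\phi_\lambda$ is invoked; the leaf-pushing renormalization you sketch instead is neither carried out nor needed for the order-of-magnitude bound.

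Beyond that, the theorem asserts sharp limits, not just matching order bounds: convergence in probability of $\uptau_{\T^d_\ell}/\log|\T^d_\ell|$ to a specific constant $c$ in (a), and of $\log\uptau_{\T^d_\ell}/|\T^d_\ell|$ to a constant $C$ together with the asymptotic exponential law in (b). Your treatment of these points is assertion rather than argument: ``a subadditivity argument along the levels'' does not identify a quantity that is actually subadditive in $\ell$ (and the lower bound you prove gives the constant $1$, which need not be $c$), and ``the usual metastability scheme'' in (b) requires a quantitative statement that from any surviving configuration the process re-enters a metastable state in time $o(E[\uptau_{\T^d_\ell}])$, which you do not establish; these sharp statements are precisely the hard content of \cite{cmmv13}. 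The parts of your proposal that are sound are the universal upper bound $\uptau_G\le e^{c_\lambda N}$, the lower bound $(1-\eps)\log|\T^d_\ell|$ via the independent-set comparison, and the outline of the block/oriented-percolation lower bound in (b) from strong survival, but even the last of these needs the standard (and nontrivial) construction of the coupling with supercritical oriented percolation to be spelled out.
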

In short, the asymptotic behaviour of the extinction time on ${\T}^d_\ell$ has a phase transition at the value $\lambda = \lambda_2(\T^d)$. It is worth mentioning that the above theorem remains true if $\T^d_\ell$ is replaced by $\hat \T^d_\ell$, defined as the subgraph of $\T^d$ induced by the vertex set given by a distinguished root vertex $o$ and all vertices at graph distance at most $\ell$ from $o$ ($\T^d_\ell$ is obtained from $\hat{\T}^d_\ell$ by removing one of the sub-trees emanating from the root of $\T^d_\ell$).

The main goal of this paper is to investigate the asymptotic behaviour of the extinction time of the contact process on random $(d+1)$-regular graphs, which we now describe. Fix $n \in \N$, the number of vertices, with the restriction that $n(d+1)$ be even. Let us define a random graph $G_n$ with vertex set $V_n = \{1, \ldots, n\}$. We endow each vertex with $d+1$ half-edges; attaching two half-edges (say, one belonging to vertex $i$ and another to vertex $j$) produces an edge between $i$ and $j$. Then, any perfect matching on the set of half-edges produces a graph. We choose one perfect matching uniformly at random from all the $(n(d+1)-1)\cdot (n(d+1)-3)\cdots 3 \cdot 1$ possibilities, and this produces the random edge set $E_n$. Obviously, $G_n$ is a $(d+1)$-regular graph; by using an alternate construction of $G_n$ that is described in Section \ref{s:super}, it is easy to show that, for any $R$, with probability tending to 1 as $n \to \infty$, the ball of radius $R$ around a given vertex has no loop, and is thus isomorphic to $\hat{\T}^d_\ell$.

We write $\P_n$ for the law of $G_n$, and $\P_{n,\lambda}$ for a probability measure under which both $G_n$ and the contact process on $G_n$ with parameter $\lambda$ are defined. Our main result is
\begin{thm}\label{t:mainsup}\noindent Assume $d \geq 3$.
\begin{itemize}
\item[(a)] For every $\lambda \in (0, \lambda_1(\T^d))$ there exists $C < \infty$ such that
$$\lim_{n \to \infty} \P_{n,\lambda}\Ll[\uptau_{G_n} < C\log(n)\Rr] = 1.$$
\item[(b)] For every $\lambda \in (\lambda_1(\T^d), \infty)$ there exists $c > 0$ such that
$$\lim_{n\to \infty}\P_{n,\lambda}\Ll[\uptau_{G_n} > e^{cn}\Rr] = 1.$$  
\end{itemize}
\end{thm}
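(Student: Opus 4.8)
The plan is to treat the two regimes by quite different techniques, each exploiting the local tree structure of $G_n$ but in opposite directions.

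\medskip

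\textbf{Part (a): the subcritical regime $\lambda < \lambda_1(\T^d)$.} Here the key point is that on the infinite tree $\T^d$, when $\lambda < \lambda_1(\T^d)$ the process started from a single infection dies out, and in fact does so with an exponential tail: $P_{\T^d,\lambda}[\uptau^o_{\T^d} > t] \le C e^{-ct}$, and moreover (by a standard subadditivity/attractiveness argument on the tree) the expected total number of infections ever present, and the expected size of $\xi_t$, decay exponentially. Since the ball of radius $R = R(n)$ around any fixed vertex of $G_n$ is, with high probability, isomorphic to $\hat\T^d_R$, one can couple the contact process on $G_n$ started from a single vertex with the process on $\T^d$ up until the first time the infection reaches distance $R$ from its starting point. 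The strategy is: (i) show that if we start from all of $V_n$, then after time $O(\log n)$ the infection has shrunk, with high probability, to a set of size $O(\log n)$ (or even to be empty on a positive fraction of attempts) — this uses a union bound over vertices together with the exponential decay of $\mathbb E|\xi_t^x|$ on the tree, valid as long as the relevant neighbourhoods are tree-like, which holds for $R$ up to order $\log n$; (ii) once the infection is confined to a small set, use that in any tree-like ball the process is dominated by the subcritical process on $\T^d$, which dies out in $O(\log n)$ additional time with probability bounded below; (iii) iterate $O(1)$ times, or run a direct union bound, to conclude $\uptau_{G_n} < C\log n$ whp. The one subtlety is handling the short-range loops that do appear within distance $R$ of a vertex with small but non-negligible probability; one absorbs these by working on the high-probability event that the number of such vertices is $o(n)$ and noting that the infection emanating from them can be crudely bounded.

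\medskip

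\textbf{Part (b): the supercritical regime $\lambda > \lambda_1(\T^d)$.} Since $\lambda > \lambda_1(\T^d)$, the contact process on $\T^d$ survives globally; equivalently, on the finite tree $\hat\T^d_\ell$ started from the root, the infection reaches the boundary (distance $\ell$) with probability bounded below by some $p = p(\lambda) > 0$, uniformly in $\ell$. The plan is to use this to show that a single infection typically "spreads" to a positive density of $V_n$, which then survives for exponential time. Concretely: (i) fix $\ell$ large enough that the root-to-boundary passage probability on $\hat\T^d_\ell$ exceeds some threshold; (ii) use the locally tree-like structure to argue that, starting from one infected vertex, within time $O(\ell)$ the infection reaches $\gtrsim (\text{const})^\ell$ vertices spread across the graph — this is a branching-process-type lower bound exploiting that each ball of radius $\ell$ looks like a tree and that, because $d+1 \ge 4$, the tree process is "supercritical enough" to more than compensate for the occasional collision of infection fronts (here the alternate, sequential construction of $G_n$ via the configuration model is essential, as it lets one reveal edges only as the infection explores them, so that the exploration sees fresh, tree-like neighbourhoods with high probability until a linear number of vertices is reached); (iii) once a linear fraction $\delta n$ of vertices has been infected, invoke an expansion property of $G_n$ — random regular graphs are expanders with high probability — together with a standard argument (as in the $\mathbb Z^d$-box and finite-tree cases) showing that on an expander the contact process with any fixed $\lambda > 0$, once it occupies a linear set, maintains a linear set for time $e^{cn}$ with high probability; combining, $\uptau_{G_n} > e^{cn}$ whp.

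\medskip

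\textbf{Main obstacle.} The crux of part (b) is step (ii): controlling the exploration of the infection through $G_n$ and proving that collisions of the infection front with already-explored vertices do not kill the branching growth before a linear number of vertices is reached. One must carefully couple the true contact-process dynamics with an idealized branching random walk on $\T^d$, track the discrepancy caused by the configuration-model edges closing up, and use a first-moment bound to show that up to time $c\log n$ only $o(n)$ such closures occur along the explored region — delicate because the infection is not monotone in time on a single site and the exploration is adaptive. I expect this to be the technical heart of the paper, whereas part (a) and the "expander $\Rightarrow$ exponential survival from a linear set" input to part (b) should follow more routinely from known techniques.
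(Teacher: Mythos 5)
Your plan for part (b) has a fatal gap at step (iii). There is no ``standard argument'' showing that on an expander the contact process with \emph{any} fixed $\lambda>0$ maintains a linearly occupied set for time $e^{cn}$; indeed such a statement would contradict part (a) of the very theorem you are proving, since $G_n$ is an expander with high probability and yet for $\lambda<\lambda_1(\T^d)$ the process started from \emph{full} occupancy dies in time $O(\log n)$. Expansion alone cannot distinguish $\lambda<\lambda_1(\T^d)$ from $\lambda>\lambda_1(\T^d)$, so the maintenance step must use supercriticality on the tree, and this is precisely the technical heart of the paper, not a routine input: the paper proves a geometric ``regeneration'' property (Theorem~\ref{t:regen}, via the sequential construction of the configuration model, a pass/exploration argument and a union bound over all $\binom{n}{k\eps n}$ sets) stating that whp \emph{every} set of $k\eps n$ vertices contains $\eps n$ vertices attached to pairwise disjoint copies of $\T_\ell$ within distance $r$; combined with the Salzano--Schonmann estimate (for $\lambda>\lambda_1(\T)$ the infection started at the root of $\T$ produces $\alpha^\ell$ infected vertices at distance $\ell$ in time $O(\ell)$ with probability $\ge p_0$), one gets that any configuration with $\ge k\eps n$ infected vertices regenerates $\ge k\eps n$ infected vertices a bounded time later, up to an event of probability $e^{-cn}$, and iterating gives $\uptau_{G_n}>e^{cn}$. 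Note also that the subtle point for $\lambda\in(\lambda_1,\lambda_2)$ is that survival on $\T$ is only global, so the infection must constantly move into \emph{fresh} tree-like regions; your step (ii) (growing from a single seed, which is anyway unnecessary since $\uptau_{G_n}$ starts from all of $V_n$) and your expander heuristic do not address this, whereas the regenerative-set structure is designed exactly for it.

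Part (a) also has a gap, though a more repairable one. Your union bound requires the $R$-neighbourhood of \emph{every} vertex to be tree-like for $R$ of order $\log n$, but whp $G_n$ contains cycles of bounded length, so this fails even for constant $R$; the proposed fix (``the infection emanating from them can be crudely bounded'') is exactly the missing ingredient, since for those vertices the domination by the process on $\T^d$ is what needs proving. The paper avoids local tree-likeness altogether: Lemma~\ref{lem:stdom} shows, via the universal covering tree and a constrained (one-particle-per-fiber) contact process, that on \emph{any} finite graph of degree at most $d+1$ the extinction time from $A$ is stochastically dominated by the extinction time on $\T^d$ from a set of size $|A|$, regardless of cycles; combining this with the exponential decay of $\mathbb{E}_{\T,\lambda}|\xi^o_t|$ for $\lambda<\lambda_1(\T)$ and a union bound over the $n$ starting vertices gives $\uptau_{G_n}<C\log n$ whp for every graph in the class, deterministically in the graph. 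If you replace your local coupling by such a covering-space domination, your part (a) goes through; without it, the vertices near short cycles are not handled.
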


Let us stress the most important point: contrary to the situation in Theorem~\ref{t:cut-trees}, the behaviour of the extinction time has here a phase transition at $\lambda_1(\T^d)$ instead of $\lambda_2(\T^d)$. 
Although surprising at first, this phenomenon can be understood as follows:
%
%
when $\ell$ is large, the tree ${\T}^d_\ell$ \emph{as seen from a vertex chosen uniformly at random} does not at all locally look like $\T^d$. For example, with non-vanishing probability, the random vertex is a leaf of ${\T}^d_\ell$. The notion of local limit of a sequence of graphs, as formalized in \cite{bs}, captures such features, and one can check that ${\T}^d_\ell$ does not converge locally to $\T^d$, but rather to another infinite graph called the \textit{canopy tree} $\CT^d$ (see \cite[Example 5.14]{benj} for details). Moreover, one can show that 
\begin{equation}
\label{critct}
\lambda_1(\CT^d) = \lambda_2(\CT^d) = \lambda_2(\T^d)
\end{equation}
(local survival for $\lambda > \lambda_2(\T^d)$ can be derived from \cite[Theorem~1.6]{cmmv13}, while extinction for $\lambda \le \lambda_2(\T^d)$ follows from \cite[Proposition~4.57]{lig99}). In light of this, the logarithmic-exponential phase transition for the extinction time on ${\T}^d_\ell$ does happen at the \emph{lower} critical value of the limiting graph after all, but one must take into account that this limiting graph is the canopy tree.

It would be very interesting to find wider classes of graphs (such as the configuration model described below) for which one can prove that the same phase transition occurs at the lower critical value of the limiting graph. This is reminiscent of the question of the locality of the percolation critical probability, see \cite{bnp}, \cite[Section~5.2]{benj} and \cite{mt}.



The graph $G_n$ is a particular case of the class of graphs known as the \textit{configuration model}. Whereas for $G_n$ we assumed that all degrees are taken equal to $d+1$, in the configuration model one allows for a random degree sequence, typically i.i.d. from some fixed degree distribution. The contact process on these graphs has lately received attention (\cite{cd09,mmvy,mvy}). In particular, it was proved that, if the degree distribution is a power law (and the graph is assumed to be connected), then the extinction time grows exponentially with the number of vertices \textit{regardless of the value of $\lambda$}. In this sense, one can say that the contact process on the configuration model with a power law degree distribution has no phase transition: it is ``always supercritical''. Theorem \ref{t:mainsup} shows that, if the degrees are constant, then there is a phase transition. In fact, putting together Lemma \ref{lem:stdom} below with the main result of \cite{mmvy}, we get

\begin{thm}
Let $\mathcal{G}_{n,d}$ be the set of connected graphs with $n$ vertices and degree bounded by $d+1$.
\begin{itemize}
\item[(a)] For any $\lambda \in (0, \lambda_1(\T^d))$, there exists $C < \infty$ such that 
$$\lim_{n \to \infty} \inf_{G \in \mathcal{G}_{n,d}} P_{G,\lambda}\Ll[\uptau_G < C \log n\Rr] = 1.$$
\item[(b)] For any $\lambda \in (\lambda_c(\Z), \infty)$, there exists $c > 0$ such that
$$\lim_{n \to \infty} \inf_{G \in \mathcal{G}_{n,d}} P_{G,\lambda}\Ll[\uptau_G > e^{cn}\Rr] = 1.$$
\end{itemize}
\end{thm}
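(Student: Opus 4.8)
The plan is to prove the two parts by genuinely different mechanisms. Part (a) is a soft consequence of the subcritical behaviour of the contact process on $\T^d$ together with the comparison of Lemma~\ref{lem:stdom}: in fact it is proved by \emph{exactly} the computation that underlies Theorem~\ref{t:mainsup}(a), the only change being that the $(d+1)$-regular graph $G_n$ is replaced by an arbitrary $G\in\mcl{G}_{n,d}$, which is legitimate because neither the comparison nor the duality step below uses anything beyond "maximum degree $\le d+1$" and "$n$ vertices". Part (b) is a direct quotation of the main result of \cite{mmvy}.

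For part (a), fix $\lambda<\lambda_1(\T^d)$. First I would recall the exponential tail of the extinction time on the infinite tree from a single seed: there are constants $C_0,c_0\in(0,\infty)$ (depending on $\lambda$ and $d$) such that, for a fixed vertex $o$ of $\T^d$,
\begin{equation}
\label{eq:final-treetail}
P_{\T^d,\lambda}\Ll[\uptau^o_{\T^d} > t\Rr] \le C_0\, e^{-c_0 t}, \qquad t \ge 0,
\end{equation}
which holds precisely because $\lambda$ is below the lower critical value (this is exactly one of the ingredients used to establish Theorem~\ref{t:mainsup}(a)). Next, given $G\in\mcl{G}_{n,d}$ and $x\in V(G)$, Lemma~\ref{lem:stdom} lets me compare the contact process on $G$ from $\{x\}$ with the contact process on $\T^d$ from $o$, so that $\uptau^x_G$ is stochastically dominated by $\uptau^o_{\T^d}$; with \eqref{eq:final-treetail} this gives $P_{G,\lambda}[\uptau^x_G>t]\le C_0 e^{-c_0 t}$ for every $x$. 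Finally I would pass from a single seed to full occupancy by self-duality of the contact process: with $n=|V(G)|$,
\begin{align}
P_{G,\lambda}\Ll[\uptau_G > t\Rr] = P_{G,\lambda}\Ll[\xi^{V(G)}_t \neq \emptyset\Rr] &\le \sum_{x \in V(G)} P_{G,\lambda}\Ll[x \in \xi^{V(G)}_t\Rr] \notag \\
&= \sum_{x\in V(G)} P_{G,\lambda}\Ll[\uptau^x_G > t\Rr] \le n\, C_0\, e^{-c_0 t}. \label{eq:final-union}
\end{align}
Choosing any $C>1/c_0$ and $t=C\log n$ turns the \rhs\ of \eqref{eq:final-union} into $C_0\, n^{1-c_0 C}\to 0$; since no constant depends on $G$, this yields $\inf_{G\in\mcl{G}_{n,d}}P_{G,\lambda}[\uptau_G<C\log n]\to 1$, which is part (a).

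For part (b), fix $\lambda>\lambda_c(\Z)$. Every $G\in\mcl{G}_{n,d}$ is a connected graph on $n$ vertices of maximum degree at most $d+1$, and the main theorem of \cite{mmvy}, applied with the degree bound $D=d+1$, asserts exactly that for such $\lambda$ the extinction time of the contact process on any connected graph of bounded degree is at least exponential in the number of vertices, with probability tending to $1$ and uniformly over the graph. Hence there is $c>0$ (depending only on $\lambda$ and $d$) with $\inf_{G\in\mcl{G}_{n,d}}P_{G,\lambda}[\uptau_G>e^{cn}]\to 1$, which is part (b). (Note that, since $\lambda_1(\T^d)<\lambda_c(\Z)$, the window $\lambda\in[\lambda_1(\T^d),\lambda_c(\Z)]$ is not covered by the statement.)

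The derivation itself presents no real obstacle: once Lemma~\ref{lem:stdom}, the tree estimate \eqref{eq:final-treetail}, and the theorem of \cite{mmvy} are in hand, everything reduces to the union bound \eqref{eq:final-union} and a one-line optimisation over $t$. The substantive content sits entirely inside those inputs — the exponential subcriticality of the contact process on $\T^d$ for (a), and the block/percolation-type construction of \cite{mmvy}, which extracts from an arbitrary connected bounded-degree graph on $n$ vertices a sufficiently redundant sub-structure to keep a supercritical infection alive for time $e^{cn}$, for (b). The only new observation here is that Lemma~\ref{lem:stdom} makes the first input apply uniformly over all of $\mcl{G}_{n,d}$, not merely over $G_n$.
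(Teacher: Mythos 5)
Your proposal is correct and takes essentially the same route as the paper: part (a) combines the subcritical estimate on $\T^d$ with the comparison Lemma~\ref{lem:stdom} and a union bound to get the uniform bound $nC_0e^{-c_0t}$, and part (b) simply quotes the main result of \cite{mmvy}. The only cosmetic difference is that you apply Lemma~\ref{lem:stdom} to singletons and transfer the union bound to $G$ via self-duality, while the paper applies the lemma directly with $A=V(G)$ and does the union bound over the $n$ initial seeds on the tree inside its Proposition; both yield the same estimate.
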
 
In particular, if we take the contact process on the configuration model with a degree distribution with \textit{bounded support}, the extinction time exhibits a phase transition on $\lambda$. An open problem that we believe to be of much interest is whether such a phase transition also occurs if the degree distribution has unbounded support, but a light tail. Equally interesting (and difficult) is the corresponding question for supercritical Erd\H{o}s-R\'enyi random graphs. 

\medskip

We now comment on the proofs of the two parts of Theorem \ref{t:mainsup}. Part (a) relies on the fact, stated in Lemma \ref{lem:stdom}, that the contact process on any graph of degree bounded by $d+1$ is stochastically dominated (in terms of the number of infected vertices) by a contact process with same  $\lambda$ on $\T^d$. Although this is a very elementary and natural statement, we did not find it in the literature, so we give a proof, which relies on the concept of universal covering of a graph. Once Lemma \ref{lem:stdom} has been established, we invoke known bounds for the extinction time of the contact process on $\T^d$ in the extinction regime in order to conclude the proof.

As a side note, we remark that Lemma \ref{lem:stdom} also applies to infinite graphs, and as a consequence we prove:
\begin{thm}
\label{t:univsub}
If $\lambda \leq \lambda_1(\T^d)$, then the contact process with parameter $\lambda$ on any graph with degree bounded by $d$ dies out.
\end{thm}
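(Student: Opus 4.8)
The plan is to reduce the statement on an arbitrary graph $G$ of bounded degree to the known extinction result for $\T^d$ via the universal cover, exactly as foreshadowed in the discussion preceding Lemma~\ref{lem:stdom}. First I would observe that it suffices to treat connected graphs, since the contact process on $G$ restricted to any connected component evolves independently, so $G$ dies out if and only if each of its components does. Fix then a connected graph $G$ with all degrees at most $d$, fix a vertex $x \in V(G)$, and let $\pi \colon \td G \to G$ be the universal covering map, where $\td G$ is a tree. Since $G$ has degree bounded by $d$, so does $\td G$; hence $\td G$ embeds (as a subgraph, by iteratively attaching pendant vertices and paths to exhaust the missing degrees) into $\T^{d-1}$, the infinite $d$-regular tree. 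By monotonicity of the contact process in the graph (adding edges can only help the infection), it is enough to show that the contact process with parameter $\lambda$ on $\td G$ dies out whenever $\lambda \le \lambda_1(\T^d)$.

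Next I would invoke Lemma~\ref{lem:stdom}: the contact process on any graph of degree bounded by $d+1$ — in particular on $G$ itself, whose degree is at most $d \le d+1$ — is stochastically dominated, in the sense of the number of infected vertices, by the contact process with the same $\lambda$ on $\T^d$. Actually the cleanest route is to apply the covering-space coupling directly: the contact process $\td\xi_t$ on $\td G$ started from a single infection at a lift $\td x$ of $x$ projects, under $\pi$, onto a process on $G$ that dominates the contact process $\xi^x_t$ on $G$ (each infected vertex of $G$ has at least one infected pre-image in $\td G$, because transmissions lift). Therefore $P_{G,\lambda}[\uptau^x_G = \infty] \le P_{\td G,\lambda}[\uptau^{\td x}_{\td G} = \infty]$. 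Composing this with the subgraph embedding $\td G \hookrightarrow \T^{d-1} \subset \T^d$ and monotonicity, we get
$$
P_{G,\lambda}\Ll[\uptau^x_G = \infty\Rr] \;\le\; P_{\T^d,\lambda}\Ll[\uptau^o_{\T^d} = \infty\Rr].
$$
For $\lambda \le \lambda_1(\T^d)$ the right-hand side is $0$ by the very definition of $\lambda_1$ (more precisely, $\lambda < \lambda_1(\T^d)$ gives extinction immediately, and the boundary case $\lambda = \lambda_1(\T^d)$ is covered by the known fact that the contact process on $\T^d$ dies out at its lower critical value — see \cite[Section I.4]{lig99}). Since $x$ was arbitrary and extinction from a single site implies extinction from every finite set, the contact process on $G$ dies out.

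The only genuinely delicate point is the passage from $\td G$ to a regular tree of bounded degree: one must make sure that a locally finite tree with degrees bounded by $d$ really does embed as a subgraph of $\T^{d-1}$ (equivalently of $\T^d$). This is elementary — pick a root, embed it, and extend level by level, using at each already-embedded vertex of current degree $k < d$ some of its $d-k$ (or $d-k-1$ at the root, but there is slack since we use $\T^d$ rather than $\T^{d-1}$) unused neighbours in the regular tree to host the children — but it is the step where the hypothesis "degree bounded by $d$" (rather than $d+1$) is used: it guarantees that $\T^d$, with its branching number $d$, has enough room. Everything else is a direct citation of Lemma~\ref{lem:stdom} (or its proof) together with standard monotonicity and the definition of $\lambda_1(\T^d)$.
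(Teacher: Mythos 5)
Your overall route --- pass to the universal cover, dominate by the contact process on $\T$, and use a.s.\ extinction on $\T$ for $\lambda \le \lambda_1(\T)$ including the critical value --- is the same skeleton as the paper's. The genuine gap is in the step you present as the ``cleanest route'': the claim that the contact process on $\td G$ projects to a process dominating the contact process on $G$ ``because transmissions lift''. No direct graphical coupling lies behind that phrase: if you lift the Harris system of $G$ to $\td G$, all pre-images of a given vertex (resp.\ oriented edge) receive \emph{identical} clocks, so the lifted object is not the contact process on $\td G$ and no comparison with it follows; projecting an independent Harris system from $\td G$ down to $G$ is not meaningful either, since infinitely many recovery clocks would be superposed at a single vertex. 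The correct mechanism is the constrained process from the proof of Lemma~\ref{lem:stdom} (at most one infected vertex per fiber, births into occupied fibers suppressed), whose projection has exactly the law of the contact process on $G$ and which is dominated pathwise by the contact process on the cover. But that lemma, as stated and as proved, concerns \emph{finite} $G$: finiteness is what makes the constrained process manifestly well defined (never more than $|V|$ infected vertices). Your $G$ is a general, possibly infinite, graph, so you cannot simply ``invoke Lemma~\ref{lem:stdom}'' for $G$ itself, and your substitute justification does not stand on its own.

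This is precisely the point the paper's proof is built to handle, and it does so without ever extending the lemma to infinite graphs: it introduces $\upkappa^x_G$, the maximal distance ever reached by the infection, notes that the event $\{\upkappa^x_G \le r\}$ coincides with the corresponding event for the contact process on the finite ball $B_G(x,2r)$, applies the finite-graph domination (the unnumbered lemma on $\upkappa$ following Lemma~\ref{lem:stdom}) to get $\P[\upkappa^x_{B_G(x,2r)} \le r] \ge \P[\upkappa^o_\T \le r]$, and lets $r \to \infty$, using that $\upkappa^o_\T < \infty$ almost surely when $\lambda \le \lambda_1(\T)$ (extinction at and below $\lambda_1$, the critical-case fact you correctly flag). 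To repair your write-up, either insert this truncation step, or prove that the fiber-constrained construction is well posed on an infinite bounded-degree graph --- this is true but needs an argument, since the suppression rule depends on the state of an entire (infinite) fiber; one can use the a.s.\ finiteness of the set of sites reachable by infection paths in bounded time. Two minor points: the reduction ``it is enough to show the contact process on $\td G$ dies out'' is only legitimate \emph{after} the domination is in place, since $G$ is a quotient, not a subgraph, of $\td G$; and the hypothesis ``degree bounded by $d$'' is not what makes the embedding work --- a tree of degree bounded by $d+1$ also embeds in $\T = \T^d$.
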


The proof of part (b) of Theorem \ref{t:mainsup} is much more involved,  and relies on a geometric property of $G_n$ (Theorem~\ref{t:regen}). The idea can be roughly summarised as follows. Assume that at time $0$, there are $\eps n$ infected vertices ($\eps > 0$ small) with the property that within distance $r$ of each of these vertices, one can find a copy of $\T^d_\ell$, the copies (and the paths to them) being pairwise disjoint. Let us say that a set satisfying this property is \emph{regenerative}. If $\ell$ can be chosen sufficiently large, then after some time, the infection will spread to $k \eps n$ vertices with $k$ large, except for an event whose probability is exponentially small in $n$. The crucial geometric property of $G_n$ that we need is that if $k$ is sufficiently large, then with probability tending to $1$, \emph{any} subset of size $k\eps n$ contains a regenerative subset of size $\eps n$. This enables to iterate the argument. Since the probability of failure of a given step is exponentially small, the extinction time must be exponentially long.

\medskip


In the rest of this introduction, we give a summary of the notation we use and an exposition of the graphical construction of the contact process.
Sections \ref{s:super} and \ref{s:subcrit} are devoted to proving parts (b) and (a) of Theorem \ref{t:mainsup} respectively.

\subsection{Summary of notation for sets and graphs.}
For a set $A$, we write $|A|$ for the cardinality of $A$ and $\1_A$ for the indicator function of $A$.

A \emph{graph} is a pair $G = (V, E)$, where $V$ is the set of \emph{vertices} and $E$ is the set of \emph{edges}. 
With each edge is associated an unordered pair of points in $V$, the end-points of the edge. Loops (i.e.\ edges between a vertex and itself) and multiple edges with the same end-points are allowed. We may abuse notation and write $v \in G$ instead of $v \in V$ when convenient. The \textit{degree} of a vertex is the number of non-loop edges that contain it plus twice the number of loops that contain it. The \textit{graph distance} between vertices $x$ and $y$ is the length of a shortest path between $x$ and $y$, and is denoted by $\mathsf{dist}(x,y)$, or $\mathsf{dist}_G(x,y)$ when we want to make the graph explicit.

The set $\vec{E}$ of oriented edges of $G$ is defined as follows: for each $e \in E$, we add to $\vec{E}$ two oriented edges corresponding to the two possible orientations of $e$. A generic element of $\vec{E}$ will be denoted by $\vec{e}$. Note that, if there are $k$ edges containing $x$ and $y$, then there are $2k$ oriented edges containing $x$ and $y$. We write $v_0(\vec{e})$ and $v_1(\vec{e})$ to denote the starting and ending vertex of $\vec{e}$, respectively. We also let $u(\vec{e}) \in E$ denote the unoriented edge to which $\vec{e}$ is associated.

A \emph{rooted graph} is a pair $(\rho,G)$ where $G$ is a graph and $\rho \in G$. Given two rooted graphs $(\rho, G)$ and $(\rho', G')$ with $G = (V, E)$, $G' = (V', E')$, we say that $f : V \to V'$ is an \emph{embedding} of $(\rho,G)$ into $(\rho',G')$ if 
\begin{itemize}
\item $f(\rho) = \rho'$,
\item $f$ is injective,
\item for every $u,v \in V$, $u$ and $v$ are neighbours in $G$ if and only if $f(u)$ and $f(v)$ are neighbours in $G'$. 
\end{itemize}
We say that $(\rho',G')$ \emph{embeds} $(\rho,G)$ if there exists an embedding of $(\rho, G)$ into $(\rho',G')$; that $(\rho,G)$ and $(\rho',G')$ are \emph{isomorphic} if there exists a bijective embedding from one to the other.

For the rest of the paper, we fix $d \geq 3$ and omit $d$ from $\T^d$ and $\T^d_\ell$, thus writing $\T$ and $\T_\ell$ respectively. The root of $\T_\ell$ is denoted by $o$.

\subsection{Graphical construction of the contact process.}
Almost every paper on the contact process contains a brief exposition of its graphical construction, so this is by now quite a redundant addition. We nevertheless include it here because we allow our graphs to contain loops and parallel edges, so we need to be careful with the notation to avoid confusion.  

Fix a graph $G = (V,E)$ and an infection rate $\lambda > 0$. Assume given the following families of independent Poisson point processes on $[0,\infty)$: $(T_{\vec{e}})_{\vec{e} \in \vec{E}}$, all with rate $\lambda$, and $(D_x)_{x \in V}$, all with rate 1.  We now imagine $G$ is embedded on the $xy$-plane and add marks on the vertical lines $(V\cup \vec{E}) \times [0,\infty)$ as follows: for each $x$ and each $t \in D_x$, we add a so-called \textit{recovery mark} at $(x,t)$, and for each $\vec{e}$ and each $t \in T_{\vec{e}}$, we add a so-called \textit{transmission arrow} at $(\vec{e},t)$.

 Given $x,y \in V$ and $0 \leq s \leq t$, we say that $(x,s)$ and $(y,t)$ are connected by an \textit{infection path} (and write $(x,s) \leftrightarrow (y,t)$ if there exists a right-continuous function $\gamma:[s,t] \to V$ such that \begin{itemize}
\item $\gamma(s) = x,\;\gamma(t) = y$,
\item  $(r,\gamma(r)) \notin  D_{\gamma(r)}$ for all $r \in [s,t]$,
\item $\gamma(r-) \neq \gamma(r)$ implies $r \in T_{\vec{e}}$ for some $\vec{e}$ with $v_0(\vec{e}) = \gamma(r-),\;v_1(\vec{e}) = \gamma(r)$.
\end{itemize}
In other words, $(r,\gamma(r))_{s\leq r \leq t}$ must be a path from $(x,s)$ to $(y,t)$ which does not cross any recovery mark and is allowed to traverse transmission arrows.

For $x \in V$, let $\xi^x_t = \{y \in V: (x,0) \leftrightarrow (y,t)\}$; for $A \subset V$, let $\xi^A_t = \cup_{x \in A} \;\xi^x_t$. Then, $(\xi^A_t)_{t\geq 0}$ is a Markov process on $\{0,1\}^V$ with the same distribution as the one given by the generator $\mathcal{L}$ defined earlier. The graphical construction has the advantage that all the contact processes $((\xi^A_t)_{t\geq 0})_{A \subset V}$ are defined in the same probability space and, if $A \subset B$, then $\xi^A_t \subset \xi^B_t$ for all $t$. Note also that $\xi^A_{s+t} = \cup_{x\in \xi^A_{t}} \{y:(x,t) \leftrightarrow (y,t+s)\}$.


\section{Supercritical regime}

The goal of this section is to prove part (b) of Theorem~\ref{t:mainsup}. We first state for later usage a classical large deviation result on binomial random variables, whose proof is standard.
\begin{lem}
\label{l:largedev}
Let $\mathsf{Bin}(m,p)$ denote a binomial random variable with parameters $m \in \N$ and $p \in [0,1]$ (defined with respect to the measure $\P$, say). For every $\delta \ge 0$, 
$$
\P[\mathsf{Bin}(m,p) \ge (p+\delta) m] \le e^{-m \psi_p(\delta)},
$$
where
\begin{eqnarray}
\label{defpsip}
\psi_p(\delta) &  = & \sup_{\lambda} \Ll[ \lambda (p+\delta) - \log(1-p+pe^\lambda )  \Rr] \notag \\
& = & (p+\delta) \log\Ll( \frac{p+\delta}{p} \Rr) + (1-p-\delta) \log\Ll( \frac{1-p-\delta}{1-p} \Rr) .
\end{eqnarray}
\end{lem}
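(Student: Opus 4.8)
The plan is to use the standard exponential Chernoff bound. I would first write $\mathsf{Bin}(m,p)$ as a sum $\sum_{i=1}^m Y_i$ of $m$ independent Bernoulli$(p)$ random variables, so that for every $\lambda \ge 0$ the moment generating function factorizes as $\mathbb{E}\bigl[e^{\lambda\,\mathsf{Bin}(m,p)}\bigr] = (1-p+pe^\lambda)^m$. Applying Markov's inequality to the nonnegative variable $e^{\lambda\,\mathsf{Bin}(m,p)}$ then gives
$$
\P[\mathsf{Bin}(m,p) \ge (p+\delta)m] \;\le\; e^{-\lambda(p+\delta)m}\,(1-p+pe^\lambda)^m \;=\; \exp\!\Ll( -m\Ll[ \lambda(p+\delta) - \log(1-p+pe^\lambda) \Rr] \Rr),
$$
and optimizing over $\lambda \ge 0$ (i.e.\ taking the supremum of the bracket) yields the bound $e^{-m\psi_p(\delta)}$ with $\psi_p(\delta) = \sup_{\lambda \ge 0}\Ll[\lambda(p+\delta) - \log(1-p+pe^\lambda)\Rr]$, which is the inequality claimed in the lemma.

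It then remains to identify this constrained supremum with the (a priori larger) unconstrained supremum appearing in \eqref{defpsip} and to compute it in closed form. The function $g(\lambda) = \lambda(p+\delta) - \log(1-p+pe^\lambda)$ is concave in $\lambda$, and $g'(0) = (p+\delta) - p = \delta \ge 0$; hence its maximizer lies in $[0,\infty)$ and the constrained and unconstrained suprema coincide. The degenerate cases $p \in \{0,1\}$ and $p+\delta \ge 1$ are handled separately and trivially (the relevant event being either all of the sample space, or $\{\mathsf{Bin}(m,p) = m\}$, or empty), using the convention $0\log 0 = 0$; so I may assume $0 < p$ and $p + \delta < 1$.

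For the explicit value, I would solve $g'(\lambda) = (p+\delta) - \tfrac{pe^\lambda}{1-p+pe^\lambda} = 0$, which gives $e^{\lambda^\star} = \tfrac{(p+\delta)(1-p)}{p(1-p-\delta)}$ and, substituting back, $1-p+pe^{\lambda^\star} = \tfrac{1-p}{1-p-\delta}$. Plugging these two expressions into $g(\lambda^\star) = \lambda^\star(p+\delta) - \log(1-p+pe^{\lambda^\star})$ and regrouping the logarithms yields
$$
\psi_p(\delta) = (p+\delta)\log\!\Ll(\frac{p+\delta}{p}\Rr) + (1-p-\delta)\log\!\Ll(\frac{1-p-\delta}{1-p}\Rr),
$$
as claimed. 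There is no genuine difficulty in this argument; the only mild subtleties, both addressed above, are the reduction from the unconstrained to the constrained supremum and the bookkeeping in the degenerate boundary cases.
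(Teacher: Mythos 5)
Your Chernoff-bound argument (Markov's inequality applied to $e^{\lambda\,\mathsf{Bin}(m,p)}$, factorization of the moment generating function, optimization over $\lambda$, and the closed-form evaluation of the supremum) is correct, and it is precisely the standard proof that the paper invokes when it states the lemma "whose proof is standard" without giving details. Your extra care with the constrained versus unconstrained supremum and the boundary cases $p\in\{0,1\}$, $p+\delta\ge 1$ is sound and goes slightly beyond what the paper records.
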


We now recall the following important estimate from \cite{ss98}.

\label{s:super}
\begin{lem}\cite{ss98}
\label{l:ss}
For every $\lambda > \lambda_1(\T)$, there exists $S$, $p_0 > 0$ and $\alpha > 1$ such that for every $\ell$ large enough,
$$
P_{\T,\lambda}\Ll[ \left| \{x \in \xi^o_{\ell S}: \mathsf{dist}(o,s) = \ell \} \right|  \ge \alpha^\ell  \Rr] \ge p_0,
$$
\end{lem}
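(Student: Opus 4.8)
This is the estimate of \cite{ss98}; let me outline the strategy one would follow (and which is essentially carried out there). The plan is to build, inside the contact process on $\T$ started from $o$, an embedded supercritical Galton--Watson process whose particles sit on vertices of $\T$ and drift away from $o$ at a linear rate: the $j$-th generation will consist of distinct vertices, all infected at time $jT$ and all at distance $jL$ from $o$, for well-chosen constants $L\in\N$ and $T<\infty$. Counting generations then yields an exponential (in the radius) number of infected sites on a sphere whose radius is proportional to the elapsed time, which is exactly the claim with $S:=T/L$.

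For the embedding, for $v\in\T$ let $\T_v$ be the subtree hanging below $v$ --- the connected component of $v$ in $\T$ once the edge from $v$ to its parent is deleted (with $\T_v=\T$ if $v=o$) --- which is a rooted $d$-ary tree all of whose non-root vertices have degree $d+1$. I would first use the standard fact that $\lambda_1(\T_v)=\lambda_1(\T)$: the inequality ``$\ge$'' is immediate since $\T_v$ is a subgraph of $\T$, and ``$\le$'' can be obtained by inspecting the last time $o$ is infected by the process on $\T$, after which the infections in the $d+1$ subtrees hanging at $o$ evolve independently. Fixing $L$ and $T$, to a particle sitting at $(v,t)$ I attach as children the vertices $w$ at distance $L$ below $v$ that are infected at time $t+T$ by the contact process confined to $\T_v$ --- i.e.\ built from the infection paths that never leave $\T_v$ --- and started from $\{v\}$ at time $t$. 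Distinct vertices at a common distance from $o$ have disjoint hanging subtrees, and the time interval $[t,t+T]$ of a particle is disjoint from the intervals $[t+T,t+2T]$ of its children, so all these ``blocks'' use pairwise disjoint portions of the graphical construction; hence this is a bona fide Galton--Watson process, with offspring law given by the number of vertices at distance $L$ from $v$ infected at time $T$ by the contact process confined to $\T_v$ started from $\{v\}$ (a law that does not depend on $v$).

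The substantive point is that this offspring mean can be made larger than $1$. Since $\lambda>\lambda_1(\T)=\lambda_1(\T_v)$, the expected size of the infected set of the contact process confined to $\T_v$ started from $\{v\}$ grows exponentially in time --- this is the content of (or a direct consequence of) the estimate of \cite{ss98}. On the other hand, a crude union bound over infection paths shows that at time $T$ this infected set lies within distance $CT$ of $v$ for a suitable $C=C(\lambda,d)$, up to a negligible expected contribution. Therefore, for $T$ large, an exponentially large expected mass is distributed over at most $CT+1$ spheres, so on some sphere of radius $L\le CT$ the expected number of infected vertices exceeds $1$; I would fix such $L$ and $T$, making the embedded Galton--Watson process supercritical with mean $m>1$.

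The remainder is routine. By classical theory of supercritical Galton--Watson processes with bounded offspring, $Z_j/m^j$ converges almost surely and in $L^1$ to a nonnegative $W$ with $\P[W>0]$ equal to the (positive) survival probability of the process; hence $\P[Z_j\ge(m/2)^j]\ge p_0$ for all large $j$, for some $p_0>0$. Each $j$-th generation particle is a distinct vertex of $\xi^o_{jT}$ at distance $jL$ from $o$, and $jT=\ell S$ when $\ell=jL$, so this gives the lemma for every $\ell\in L\N$ large, with $\alpha:=(m/2)^{1/L}>1$. For a general large $\ell$, writing $\ell=jL+r$ with $0\le r<L$, one pushes a further $r$ levels out of each $j$-th generation particle within the extra time $rS$; this succeeds with probability bounded below by a constant depending only on $r$, costing only a multiplicative constant that is absorbed by slightly decreasing $\alpha$. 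The one genuine difficulty is the exponential growth of the infected population in the survival regime on a tree; everything else is the standard packaging of a restart estimate into a Galton--Watson comparison.
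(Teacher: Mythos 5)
Two preliminary remarks. First, the paper itself gives no proof of this lemma: it is quoted directly from \cite{ss98}, so there is no internal argument to compare yours with. Second, the skeleton of your outline is sound and standard: the blocks $\T_v\times[jT,(j+1)T]$ (subtree hanging below $v$, one time-layer) are pairwise disjoint regions of the graphical construction, so the comparison with a Galton--Watson process whose offspring are the vertices at distance $L$ below $v$ infected at time $T$ by the process confined to $\T_v$ is legitimate, and the Kesten--Stigum step and the patch for $\ell\notin L\N$ are indeed routine.

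The genuine gap is precisely at what you call the substantive point, namely that the offspring mean can be made larger than $1$. You justify it by asserting that, since $\lambda>\lambda_1(\T)=\lambda_1(\T_v)$, the expected size of the confined infected set grows exponentially in time, and you attribute this to ``the estimate of \cite{ss98}'' --- but that estimate is exactly the statement being proved, so as written the argument is circular. Nor is the assertion a soft consequence of survival: on $\Z^d$ the supercritical contact process survives while $\mathbb{E}\bigl[|\xi^0_t|\bigr]$ grows only polynomially, so ``$\lambda>\lambda_1$ implies exponential growth of the expectation'' is a tree-specific theorem; it requires the quantitative machinery of \cite[Section I.4]{lig99} (the functions $\phi_\lambda(\rho)$ and $\beta(\lambda)$, yielding $\phi_\lambda(1)>1$ for $\lambda>\lambda_1$, the mirror image of the computation this paper performs in Section \ref{s:subcrit} for $\lambda<\lambda_1$), or an equivalent argument. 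Even granting that input, it concerns the \emph{unconfined} process on $\T$, whereas your pigeonhole over spheres must be applied to the process confined to the rooted subtree $\T_v$ and counting only descendants of $v$; you provide no transfer between the two, and survival of the confined process (itself argued only loosely, since the ``last time $o$ is infected'' is not a stopping time) does not by itself give any lower bound on its expected size. In short, your outline correctly reduces the lemma to the inequality $a(L,T)>1$, where $a(L,T)$ is the expected number of vertices at distance $L$ below the root infected at time $T$ by the confined process, but the proof of that inequality --- which is the actual content of the cited estimate --- is missing.
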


We will need a slight modification of the above result:

\begin{lem}
\label{l:ss1}
For every $\lambda > \lambda_1(\T)$ and $r> 0$, there exist $R$, $\sigma > 0$ and $\alpha > 1$ such that for every $\ell$ large enough, the following holds. For any graph $G$ with vertices $x, y$ so that $\mathsf{dist}(x,y) \leq r$ and $(y,G)$ embeds $(o,\T_\ell)$, 
$$P_{G,\lambda}\Ll[|\xi^{\{x\}}_{R\ell}| \geq \alpha^\ell\Rr] > \sigma.$$
\end{lem}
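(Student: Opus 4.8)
The plan is to prove the lemma in three stages. First, an \emph{ignition} estimate showing that from $x$ the infection reaches $y$ within a bounded time with probability bounded away from $0$, uniformly in $G$. Second, a monotone coupling which, viewed from $y$, replaces $G$ by the copy of $\T_\ell$ that it embeds. Third --- and this is the heart of the matter --- a Salzano--Schonmann-type lower bound for the contact process started at the root of the \emph{finite} tree $\T_\ell$. The constants $R$, $\sigma$ and $\alpha$ are then read off at the end.

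For the ignition, fix a shortest path $x = z_0 \sim \dots \sim z_m = y$ in $G$, so that $m \le r$. In the graphical construction, let $B$ be the event that for each $1 \le i \le m$ there is a transmission arrow from $z_{i-1}$ to $z_i$ in the time window $[(i-1)/m,\, i/m)$, no recovery mark at $z_{i-1}$ in $[(i-1)/m,\, i/m)$, and no recovery mark at $z_i$ in $[(i-1)/m,\, 1]$. On $B$ one has $(x,0) \leftrightarrow (y,1)$, hence $y \in \xi^{\{x\}}_1$, and $P_{G,\lambda}[B] \ge c_1(r,\lambda) > 0$. Since $B$ depends only on the construction on $[0,1]$, while $(\xi^{\{x\}}_{1+s})_{s \ge 0}$ on the event $B$ dominates a process with the law of $(\xi^{\{y\}}_s)_{s \ge 0}$ built from the (independent) construction on $[1,\infty)$, we obtain $P_{G,\lambda}[\,|\xi^{\{x\}}_{1+s}| \ge N\,] \ge c_1\, P_{G,\lambda}[\,|\xi^{\{y\}}_s| \ge N\,]$ for all $s \ge 0$ and $N \in \N$. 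Next, if $\varphi$ embeds $(o,\T_\ell)$ into $(y,G)$, couple the graphical construction on $G$ with one on $\T_\ell$ by copying, for each vertex and each oriented edge of $\T_\ell$, the Poisson clocks of its image under $\varphi$, all remaining clocks of $G$ being independent; because $\varphi(\T_\ell)$ is an induced subgraph of $G$, every infection path on $\T_\ell$ from $o$ lifts via $\varphi$ to an infection path on $G$ from $y$ (the extra vertices and edges of $G$ can only add infection), so that $|\xi^{\{y\},G}_s| \ge |\xi^{\{o\},\T_\ell}_s|$. It therefore suffices to produce $R_0 > 0$, $\sigma_0 > 0$ and $\alpha > 1$ such that, for all large $\ell$,
\[
P_{\T_\ell,\lambda}\bigl[\, |\xi^{\{o\}}_t| \ge \alpha^\ell \text{ for all } t \in [R_0\ell - 1,\, R_0\ell] \,\bigr] \ge \sigma_0 ;
\]
one then takes $R = R_0$ and $\sigma = c_1\sigma_0$, the short window absorbing the unit delay from the ignition.

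The third stage is the real obstacle, and I do not expect it to follow from Lemma~\ref{l:ss} as a black box. The finite subgraph of $\T$ that naturally carries the Salzano--Schonmann estimate is a ball $\hat\T_k$ around $o$, whose centre has degree $d+1$; the hypothesis, however, only furnishes a copy of $\T_\ell$, whose root has degree $d$, and such a ball $\hat\T_k$ embeds into $\T_\ell$ only around a vertex of depth exceeding $k$, so that igniting that vertex from the root of $\T_\ell$ costs a probability exponentially small in $\ell$ --- while monotone graph-domination transfers lower bounds in the wrong direction. Instead I would re-run the renormalization scheme of \cite{ss98} directly on $\T_\ell$. That scheme is local: each of its steps inspects only a subtree of some fixed bounded depth $j_0$ hanging below a just-infected vertex; for a vertex of $\T_\ell$ at depth between $1$ and $\ell - j_0$ this subtree is isomorphic to the corresponding one in $\T$, the final steps reach depth $\ell$ through bounded $d$-ary subtrees common to the two trees, and the sole discrepancy --- at the root, of degree $d$ rather than $d+1$ --- is harmless since $d \ge 3$. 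Carried out verbatim on $\T_\ell$ it yields the analogue of Lemma~\ref{l:ss}, with appropriate $S$ and $\alpha > 1$ and a smaller but positive probability, and --- since the renormalization controls a whole range of times at once --- in the robust short-window form displayed above. The main technical burden is thus the transcription of \cite{ss98}'s argument to the finite tree; an alternative would be to run the contact process on $\T$ with a permanent source at $o$, split it into $d+1$ independent contact processes on the $d$-ary subtrees, extract the estimate on one of them, and then remove the source --- but that route additionally requires knowing that the lower critical value of the infinite $d$-ary tree equals $\lambda_1(\T)$.
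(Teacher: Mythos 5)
Your stages 1 and 2 are fine and are essentially what the paper does: its entire proof consists of the two ignition estimates (hold $x$ infected for a while, then push the infection along a shortest path from $x$ to $y$ at a constant cost per step), monotonicity of the graphical construction to pass to the embedded copy of $\T_\ell$, and a direct appeal to Lemma~\ref{l:ss}. The genuine gap is in your stage 3, which you yourself call the heart of the matter and then do not carry out. What the lemma really needs is a uniform-in-$\ell$ bound of the form $P_{\T_\ell,\lambda}\bigl[\,|\xi^{o}_{S\ell}| \ge \alpha^{\ell}\,\bigr] \ge \sigma_0$ for $\lambda>\lambda_1(\T)$, and your proposal for it is a programme, not a proof: you assert that the scheme of \cite{ss98} is ``local'', that each of its steps inspects only a bounded-depth subtree below a freshly infected vertex, that it therefore transcribes verbatim to the finite $d$-ary tree, and that it moreover yields a time-window-robust version of the estimate. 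None of these assertions is verified, and they are not obviously accurate descriptions of the argument of \cite{ss98}, which is formulated on the infinite homogeneous tree; checking them is exactly the missing content. (The window-robust form, at least, is not a real extra need: from a fixed-time bound one gets a bound over a unit window by asking each of the $\alpha^{\ell}$ infected vertices not to recover for a bounded time, at the price of slightly decreasing $\alpha$; likewise the random ignition delay can be absorbed, as in the paper, by first holding $x$ infected for time $r-\mathsf{dist}(x,y)$ so that the growth phase starts at a deterministic time.)

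Your reservation about using Lemma~\ref{l:ss} as a black box (the root of $\T_\ell$ has degree $d$ rather than $d+1$, and the event in Lemma~\ref{l:ss} is not measurable with respect to the ball of radius $\ell$) is not frivolous, but the repair you relegate to a closing aside is the cheaper and more standard one, and is what you should have executed: use the growth estimate on the \emph{infinite} $d$-ary tree (the one external fact to source being that its lower critical value coincides with $\lambda_1(\T)$), apply it at a reduced scale $m=c\ell$, and choose $c$ small enough that, by finite speed of propagation (the expected number of depth-$\ell$ vertices reached by time $Scm$ is at most $(d e \lambda S c)^{\ell}$, which is tiny for $c$ small), with probability close to one no infection path issued from $(o,0)$ exits $\T_\ell$ by time $Sm$; on that event the processes on the infinite $d$-ary tree and on $\T_\ell$ coincide, giving the required bound with $\alpha^{c}$ in place of $\alpha$, which the statement of the lemma permits. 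As written, the core estimate of the lemma is asserted rather than proved, so the proposal is incomplete.
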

\begin{proof}
This follows from Lemma \ref{l:ss} and the facts that $$P_{G,\lambda}\Ll[\xi^{\{x\}}_t(x) = 1\Rr] \geq e^{-t},\qquad P_{G,\lambda}\Ll[\xi^{\{x\}}_{\mathsf{dist}(x,y)}(y) = 1\Rr] \geq \left(e^{-2}(1-e^{-\lambda})\right)^{\mathsf{dist(x,y)}}.$$ The first estimate is obtained by considering the event that $x$ does not recover, by time $t$, of the infection it has at time $0$. The second estimate is obtained by considering the event that, in every unit time interval $[k, k+1]$ from time 0 to time $t= \mathsf{dist}(x,y)$, the infection advances one step along a shortest path from $x$ to $y$ (given a pair of vertices $u, v$ such that $u$ is infected at time $k$, $v$ is sure to be infected at time $k+1$ if, in this unit time interval, neither of them recover and there is at least one transmission from $u$ to $v$).
\end{proof}
We say that a set of vertices $W \subset V_n$ is $(\ell,r)$\emph{-regenerative} if there exists a family $(G_v')_{v \in W}$ of subgraphs of $G_n$ that are pairwise disjoint and such that for every $v \in W$, the following two conditions hold:
\begin{itemize}
\item $G_v'$ contains $v$,
\item there exists $x \in G_v'$ such that the distance in $G_v'$ between $x$ and $v$ is $r$ and $(x,G_v')$ embeds $(o,\T_\ell)$.
\end{itemize}
The crucial geometric property of $G_n$ that we need is 
\begin{thm}[Finding large regenerative subsets]
For $k$ and $r$ sufficiently large and for every $\ell$, there exists $\eps_0 > 0$ such that for every $\eps \le \eps_0$, the following holds with $\P_n$-probability tending to~$1$ as $n$ tends to infinity. From every $W \subset V_n$ of cardinality at least $k \eps n$, one can extract an $(\ell,r)$-regenerative subset of cardinality at least $\eps n$.
\label{t:regen}
\end{thm}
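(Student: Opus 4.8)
## Proof Plan for Theorem~\ref{t:regen}

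The plan is to work with the alternative ``sequential'' construction of $G_n$ (the one referred to in the introduction as being described in this section), in which the perfect matching on half-edges is revealed one pair at a time, and to exploit the fact that at each step the partner of a given half-edge is roughly uniform among the remaining unmatched half-edges. Fix a target set $W$ with $|W| \ge k\eps n$. I would build the disjoint subgraphs $(G_v')_{v \in W'}$ greedily, one vertex $v \in W$ at a time: starting from $v$, explore a self-avoiding path of length $r$ (revealing $r$ new matchings), and then from its endpoint $x$ grow a copy of $\T_\ell$ (revealing the $O(d^\ell)$ matchings needed for a full $d$-ary tree of height $\ell$). Call the exploration at vertex $v$ a \emph{success} if none of the half-edges revealed during it get matched to a half-edge already used by a previously constructed $G_u'$ (or used earlier in the current exploration, which would create a short cycle); on a success we have produced a valid $G_v'$, disjoint from all the others, and we move on to the next element of $W$; on a failure we discard $v$ and move on. The set $W'$ of successes is then automatically $(\ell,r)$-regenerative, so it suffices to show that with high probability at least a fraction $1/k$ of the vertices of $W$ are successes.

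The key estimate is a uniform lower bound on the success probability of a single exploration, \emph{conditionally} on everything revealed so far. Each exploration at $v$ reveals at most $m_0 := r + C d^\ell$ matchings, for a constant $C = C(d)$; hence, across all of $W$, we reveal at most $k\eps n \cdot m_0$ matchings, which is at most $n(d+1)/4$ (say) provided $\eps \le \eps_0 := \eps_0(k,r,\ell,d)$ is small enough. So at every step during the whole procedure, at least half of the $n(d+1)$ half-edges remain unmatched, while the number of ``forbidden'' half-edges (those already incorporated into some $G_u'$, plus those used in the current exploration) is at most $k\eps n\cdot m_0 \le \tfrac14 n(d+1)$. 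Therefore, each individual matching revealed during an exploration hits a forbidden half-edge with conditional probability at most $(\tfrac14 n(d+1))/(\tfrac12 n(d+1)) = \tfrac12$; wait — that is too crude. Instead I would simply make $\eps_0$ small enough that the forbidden set has size at most $\delta n(d+1)$ with $\delta$ tiny: then each of the $\le m_0$ matchings in one exploration avoids the forbidden set with conditional probability at least $1 - \delta n(d+1)/(\tfrac12 n(d+1)) = 1 - 2\delta$, so the whole exploration succeeds with conditional probability at least $(1-2\delta)^{m_0} =: q$, and by choosing $\delta$ small (i.e.\ $\eps_0$ small) we can make $q$ as close to $1$ as we like — in particular $q > 1 - \tfrac{1}{2k}$.

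Finally, I would convert this conditional bound into a high-probability statement by a standard martingale/stochastic-domination argument: the indicator of success at the $j$-th vertex of $W$ conditionally stochastically dominates a $\mathsf{Bernoulli}(q)$ variable, so the total number of successes stochastically dominates $\mathsf{Bin}(|W|, q) \ge \mathsf{Bin}(k\eps n, q)$. Applying the large-deviation bound of Lemma~\ref{l:largedev} (to the complementary count of failures), the number of successes is at least $q\,k\eps n - k\eps n/(4k) \cdot(\text{something}) \ge \eps n$ except on an event of probability $\le e^{-c n}$, using $q > 1-\tfrac1{2k}$ and $\eps$ fixed; in particular this probability tends to $0$. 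This yields the claim, with the understanding that ``$k$ and $r$ sufficiently large'' is used only to guarantee $r \ge$ whatever minimal path length the embedding requires and $k$ large enough that $q > 1 - 1/(2k)$ is achievable.

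The step I expect to be the main obstacle is making the greedy exploration genuinely rigorous in the configuration model: one must be careful that conditioning on the partial matching revealed so far keeps the remaining matching uniform on the remaining half-edges (this is the standard principle for the configuration model, but it requires revealing half-edges in a valid order), and one must correctly handle the self-exploration constraint — a newly revealed half-edge could be matched to a half-edge of a vertex already placed in the \emph{current} $G_v'$, creating a loop inside $G_v'$ and spoiling the tree structure; folding these ``internal'' collisions into the same forbidden-set bound is what makes the single-step estimate clean. A secondary, more bookkeeping-type point is verifying that a self-avoiding path of length $r$ followed by a clean copy of $\T_\ell$ can always be grown when no collisions occur — i.e.\ that the degrees $d+1$ suffice at every vertex we touch, which is immediate since $\T_\ell$ has maximal degree $d+1$ and we only ever need $d$ fresh half-edges at each internal tree vertex.
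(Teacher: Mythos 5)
There is a genuine gap: you prove the statement for a \emph{fixed} set $W$, revealed-graph-adaptively, but the theorem asserts that with $\P_n$-probability tending to $1$ the extraction is possible \emph{simultaneously for every} $W \subset V_n$ of size $k\eps n$. This uniformity is the whole point: in the proof of part (b) of Theorem~\ref{t:mainsup} the set $W$ is the (random, graph-dependent) set of infected vertices at a given time, so a statement for a single pre-specified $W$ cannot be applied. Your plan never addresses the union bound over the $\binom{n}{k\eps n}$ candidate sets, and in fact it cannot be closed with the estimate your greedy scheme yields. In your scheme a single collision with the forbidden set kills an exploration, so the per-exploration failure probability is of order $\eps$ (not a constant: your $\delta$ is of order $\eps$), and the best per-set bound you can extract is of the form $\P[\mathsf{Bin}(k\eps n, C\eps) \ge (k-1)\eps n] \le (c\eps)^{(k-1)\eps n}$. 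Meanwhile the number of sets of size $k\eps n$ is of order $(c'/(k\eps))^{k\eps n}$, so the product behaves like $\eps^{-\eps n}$ up to exponential-in-$n$ constants and diverges, \emph{for every} choice of $k$: tolerating up to $(k-1)\eps n$ failed explorations, each costing only a factor $\eps$, can never beat an entropy of $\eps^{-1}$ per element. (Your quoted bound $e^{-cn}$ for fixed $W$ is also not available with $c$ independent of $\eps$; it is at best $e^{-c(k)\eps n}$, which makes the mismatch even clearer.)

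This is precisely the difficulty the paper's proof is built to overcome, and it explains its seemingly roundabout structure. Rather than demanding a clean path plus a full copy of $\T_\ell$ (all-or-nothing), the paper only asks each pass to produce a \emph{pruned} $\ell$-tree, so that one ``long'' collision per pass is harmless; a pass fails only on a short collision or a \emph{double} collision, whose probability is $O(\eps^2)$. Combined with the bookkeeping of active seeds and buds (giving $\gamma_r$ close to $1$ for $r$ large), this yields a per-set failure probability $(c\eps^{\eta})^{k\eps n}$ with $\eta$ close to $2(\gamma_r - 2/k)$, hence $\eta>1$ for $k,r$ large (Proposition~\ref{p:key}); only then does the union bound over all sets of size $k\eps n$ converge (Corollary~\ref{c:key}). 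The remaining layers (reduction to $r$-prepared sets via the $\ov{c}_r$ pigeonhole and the $\sqrt n$ bound on short loops in Proposition~\ref{p:geom}, and passing from pruned $\ell$-trees to full $\T_{\ell-1}$'s to go from ``good'' to ``regenerative'') are the easy parts. To repair your argument you would need, at minimum, to redesign the exploration so that failure requires at least two collisions (or otherwise has probability $o(\eps)$ per exploration, with exponent strictly greater than $1$), and then add the union bound; as written, the approach does not prove the stated theorem.
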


\begin{rem} In what follows, in order to prevent the notation from getting too heavy, we will pretend that certain quantities, such as $\epsilon n$ and $k \epsilon n$ in the above proposition, are integers. It should be clear that in a correct but overscrupulous writing, one should take the pertinent integer parts or add or subtract 1 at certain places, but that this does not change our proofs in any relevant way.
\end{rem}

We prove part (b) of Theorem~\ref{t:mainsup} using Theorem~\ref{t:regen}, before turning to the proof of the latter.

\begin{proof}[Proof of part (b) of Theorem~\ref{t:mainsup}]
We fix $\lambda > \lambda_1(\T)$, and choose constants according to the following steps:
\begin{enumerate}\item fix $r$ and $k$ large, as required by Theorem~\ref{t:regen};
\item let $\alpha,R,\sigma$ correspond to $\lambda$ and $r$, as in Lemma \ref{l:ss1}; 
\item take $\ell$ large enough, as required by Lemma \ref{l:ss1}, and also so that $\alpha^\ell > \frac{2k}{\sigma}$;
\item take $\epsilon \leq \epsilon_0$, where $\epsilon_0$ corresponds to $k, r, \ell$ as in Theorem~\ref{t:regen}.
\end{enumerate}
Assume $G_n$ satisfies the property stated in Theorem~\ref{t:regen}, namely
$$\text{every }W \subset V_n \text{ with }|W| \geq k\epsilon n \text{ has an $(\ell,r)$-regenerative subset of cardinality $\epsilon n$}.$$
We will now prove that, for some constant $c > 0$ which does not depend on $n$,
\begin{equation}
\label{eq:equiv}\text{for all }W \subset V_n \text{ with } |W| \geq k\epsilon n,\; P_{G_n}\Ll[|\xi^W_{R\ell}| \geq k\epsilon n\Rr] \geq 1 - e^{-cn}.
\end{equation}
This will imply the statement of Theorem \ref{t:mainsup}.

We fix $W$ with $|W|\geq k\epsilon n$ and extract from it an $(\ell,r)$-regenerative subset $W'$ of cardinality $\epsilon n$; we enumerate its elements, $W'=\{v_1,\ldots, v_{\epsilon n}\}$. By the definition of $(\ell,r)$-regenerative sets, there exist pairwise disjoint subgraphs of $G$, $G'_{v_1},\ldots, G'_{v_{\epsilon n}}$ such that $v_i \in G'_{v_i}$ and there exists $x_i \in G'_{v_i}$ such that $\mathsf{dist}(v_i,x_i) \leq r$ and $(x_i,G'_{v_i})$ embeds $(o,\T_\ell)$.

For each $v_i$, let $(\zeta^{v_i}_t)_{t \geq 0}$ be the contact process on $G'_{v_i}$, started from only $v_i$ infected, and built using the same family of Poisson processes as the original contact process on $G$. If $i \neq j$, then $(\zeta^{v_i}_t)$ and $(\zeta^{v_j}_t)$ are independent (since the $G'_{v_i}$ are disjoint) and moreover, $\xi^W_t \supseteq \xi^{W'}_t \supseteq \cup_{v \in W'}\; \zeta^{v}_t$ for all $t$.

Define the events
$$E_i = \left\{|\zeta^{v_i}_{R\ell} | \geq \alpha^\ell\right\},\qquad 1 \leq i \leq \epsilon n.$$
We then have $P_{G_n}\Ll[E_i\Rr] \geq \sigma$. Thus, by standard large deviation estimates for binomial random variables (see Lemma \ref{l:largedev}), we have $$P_{G_n}\Ll[\sum_{i=1}^{\epsilon n} \1_{E_i} \geq \frac{\sigma}{2}\epsilon n\Rr] \geq 1 - e^{-c(\epsilon,\sigma)n}.$$

Finally, if the event in the above probability occurs, we have
$$|\xi^W_{R\ell}| \geq \alpha^\ell\cdot \sum_{i=1}^{\epsilon n} \1_{E_i}  \geq  \alpha^\ell \cdot \frac{\sigma}{2}\epsilon n> k\epsilon n.$$
This completes the proof.
\end{proof}

The proof of Theorem~\ref{t:regen} goes by showing first a seemingly weaker property, where the embedded trees are allowed to be somewhat damaged. We now introduce the relevant notions.

We call \emph{pruned} $\ell$-\emph{tree} any graph obtained by removing an edge from $\T_\ell$ and taking the connected component of the root. We call \emph{rooted pruned} $\ell$-\emph{tree} any rooted graph $(o,H)$, where $H$ is a pruned tree obtained from $\T_\ell$ and $o$ is the root of $\T_\ell$ (which belongs to $H$ by construction).

We say that a rooted graph $(\rho,H)$ is $(\ell,r)$-\emph{favourable} if there exists $x \in H$ such that the following two conditions hold:
\begin{itemize}
\item the distance between $x$ and $\rho$ is $r$,
\item the rooted graph $(x,H)$ embeds a rooted pruned $\ell$-tree.
\end{itemize}
We say that a set of vertices $W \subset V_n$ is $(\ell,r)$-\emph{good} if there exists a family $(G'_v)_{v \in W}$ of subgraphs of $G_n$ whose vertex sets are pairwise disjoint such that for every $v \in W$, $G'_v$ contains $v$ and $(v,G'_v)$ is $(\ell,r)$-favourable.

\begin{prop}[Finding large good subsets]
\label{p:geom}
For $k$ and $r$ sufficiently large and for every $\ell$, there exists $\eps_0 > 0$ such that for every $\eps \le \eps_0$, the following holds with $\P_n$-probability tending to~$1$ as $n$ tends to infinity. From every $W \subset V_n$ of cardinality $k \eps n$, one can extract an $(\ell,r)$-good subset of cardinality $\eps n$.
\end{prop}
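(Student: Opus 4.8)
The plan is to exploit the fact that balls of fixed radius in $G_n$ are trees with high probability, together with a counting/first-moment argument controlling the number of vertices whose neighbourhood is ``bad''. First I would fix a radius parameter, say $\rho_0 = r + \ell + 1$, large enough that the ball $B(v,\rho_0)$ of radius $\rho_0$ around a vertex $v$ of $G_n$, if it happens to contain no cycle, carries a copy of $(o,\T_\ell)$ rooted at distance exactly $r$ from $v$ (indeed, in the tree-like case one can walk $r$ steps from $v$ and then find $\T_\ell$ emanating from there, using $d \ge 3$ so there are enough children). Call a vertex $v$ \emph{nice} if $B(v, \rho_0)$ is a tree; on the event that $v$ is nice, the induced subgraph $G'_v := G_n[B(v,\rho_0)]$ witnesses $(\ell,r)$-favourability of $(v,G'_v)$, except that these subgraphs overlap for nearby vertices. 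The first key step is therefore a standard second-moment / switching argument for the configuration model showing that the expected number of \emph{non-nice} vertices is $O(1)$ (more precisely, $\P_n$-a.a.s.\ the number of vertices lying within distance $2\rho_0$ of a cycle is $o(n)$, in fact bounded in expectation by a constant depending only on $d$ and $\rho_0$); this is classical and I would cite or reprove it via the sequential pairing-of-half-edges construction alluded to in Section~\ref{s:super}.

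The second step is a disjointification argument. Given $W$ with $|W| = k\eps n$, I want to extract a large subset $W' \subseteq W$ of nice vertices that are pairwise at distance $> 2\rho_0$, so that the subgraphs $G'_v = G_n[B(v,\rho_0)]$ for $v \in W'$ are pairwise vertex-disjoint. First discard from $W$ the non-nice vertices; a.a.s.\ this removes only $o(n)$ vertices, so at least, say, $\tfrac12 k \eps n$ remain. Then greedily select vertices: each selected vertex ``blocks'' at most $\Delta := (d+1)(d)^{2\rho_0 - 1}$ others (a crude bound on $|B(v,2\rho_0)|$), so the greedy procedure yields a disjoint family of size at least $\tfrac{k\eps n}{2(\Delta+1)}$. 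Choosing $k \ge 2(\Delta+1)$ at the outset (this is the ``$k$ sufficiently large'' in the statement, and note $\Delta$ depends only on $d, r, \ell$, not on $\eps$ or $n$), this is at least $\eps n$, giving the required $(\ell,r)$-good subset.

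The step I expect to be the main obstacle is making the first-moment estimate on non-nice vertices uniform and quantitative enough, and in particular checking it is \emph{independent of $\eps$} so that the order of quantifiers in the statement ($k, r$ first, then $\ell$, then $\eps_0$) is respected. The subtlety is that we need: \emph{a.a.s., the number of non-nice vertices is less than $\tfrac12 k\eps n$} --- but since $\eps$ is fixed before $n \to \infty$ while the non-nice count is a.a.s.\ $o(n)$ (indeed $O(1)$ in expectation, hence $o(n)$ a.a.s.\ by Markov), this holds for every fixed $\eps > 0$; there is no genuine circularity, one just has to present the quantifiers carefully. A secondary point requiring care is the precise claim that a tree ball of radius $\rho_0$ around $v$ really does embed a \emph{rooted pruned $\ell$-tree} at a vertex $x$ with $\mathsf{dist}(v,x) = r$: here one uses that $G_n$ is $(d+1)$-regular, so walking out $r$ steps from $v$ lands at an internal vertex of the tree ball, from which a full $d$-ary subtree of height $\ell$ (hence a fortiori a pruned $\ell$-tree) branches off, provided $\rho_0 \ge r + \ell$; I would state this as a short deterministic lemma about balls in regular trees and then apply it on the nice event.
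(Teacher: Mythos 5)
There is a genuine gap, and it lies in the order of the quantifiers, which your own write-up brushes against but does not resolve. Your greedy disjointification forces the selected vertices of $W$ to be pairwise at distance greater than $2\rho_0$ with $\rho_0 = r+\ell+1$, so the extraction ratio you need is $k \ge 2(\Delta+1)$ with $\Delta \approx d^{2(r+\ell)}$; that is, your $k$ depends on $\ell$. But the statement of Proposition~\ref{p:geom} reads ``for $k$ and $r$ sufficiently large \emph{and for every} $\ell$'': $k$ and $r$ must be fixed first, uniformly in $\ell$ (only $\eps_0$ is allowed to depend on $\ell$). This is not a cosmetic point: in the proof of part (b) of Theorem~\ref{t:mainsup}, the constants are chosen in the order $r,k$ first, then $\alpha,R,\sigma$ from Lemma~\ref{l:ss1}, and only then $\ell$, which must satisfy $\alpha^\ell > 2k/\sigma$. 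With $k$ growing like $d^{2(r+\ell)}$ this requirement becomes $\alpha^\ell \gtrsim d^{2\ell}$, which is unattainable since $\alpha$ is a fixed constant at most of order $d$ (the infection cannot outgrow the tree). So your argument proves a weaker proposition (with $k = k(r,\ell)$) that cannot be fed into the supercritical argument.

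The reason the paper can take $k$ independent of $\ell$ is that it does not use balls around the chosen vertices as witnesses. The only separation imposed on the chosen seeds is that their $r$-neighbourhoods be disjoint and loop-free ($r$-prepared), which costs $k \gtrsim \ov{c}_r$, a function of $r$ alone; the trees of depth $\ell$ are then grown \emph{adaptively} by a sequential pairing of half-edges (the ``Pass'' in Proposition~\ref{p:key}), so the witness subgraphs for different seeds are disjoint by construction, because each pass only consumes fresh vertices. Disjointness is paid for not by spatial separation of the seeds but by a probabilistic estimate: a pass fails only upon a short collision or two long collisions, an event of probability $O(\eps^2)$, and a large-deviation bound plus a union bound over all $\binom{n}{k\eps n}$ sets $W$ (Corollary~\ref{c:key}) gives the uniform statement with $k$ and $r$ fixed before $\ell$. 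Your first-moment treatment of short cycles and the tree-ball lemma are fine as far as they go, but to prove the proposition as stated (and as used) you would need to replace the ball-plus-greedy step by an argument of this adaptive-exploration type.
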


We say that a set of vertices $W \subset V$ is $r$-\emph{prepared} if the following two properties hold:
\begin{itemize}
\item for every $v \in W$, the $r$-neighbourhood of $v$ is loop-free (we call $r$-neighbour\-hood of a site $v \in V$ the set of sites $w$ whose distance to $v$ is at most~$r$),
\item for every two distinct $v,w \in W$, the $r$-neighbourhoods of $v$ and $w$ are disjoint.
\end{itemize}
The key step of the proof lies in the following proposition.
\begin{prop}[Key estimate]
\label{p:key}
For every $k > 1$ and $\eta < 2-4/k$, there exists $r_0 \in \N$ such that the following holds. For every $r \ge r_0$ and $\ell \in \N$, there exists $c > 0$ and $\eps_0 > 0$ such that for every $\eps \le \eps_0$ and every $n$ sufficiently large, the $\P_n$-probability that the set $\X = \{1,\ldots,k \eps n\} \subset V_n$ is $r$-prepared but does not contain any $(\ell,r)$-good subset of size $\eps n$ is smaller than
$$
(c \eps^\eta)^{|\X|}.
$$
\end{prop}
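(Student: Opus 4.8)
\textbf{Proof proposal for Proposition~\ref{p:key}.}

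The plan is to reveal the random graph $G_n$ in two stages, using the configuration-model construction. In the first stage we pair up all half-edges incident to vertices within distance $r$ of $\X$, which produces a forest of neighbourhoods (since we condition on $\X$ being $r$-prepared, every such neighbourhood is a copy of $\hat\T_\ell$ up to the level where half-edges still dangle; more precisely, each $v \in \X$ sits at the root of a tree $B_r(v)$ of depth $r$ which is a truncated $(d+1)$-ary tree, and from the sphere of radius $r$ there emanate half-edges that are not yet matched). The point of the $r$-prepared hypothesis is that these balls are disjoint and loop-free, so no pairing among them has yet occurred. In the second stage, we need to match the remaining half-edges; we will only need to control, for each $v$, a single ``probe'' path of length $r$ leading from $v$ out of its ball to some far vertex $x_v$ which then has its own independent large neighbourhood. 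Because a rooted pruned $\ell$-tree is obtained from $\T_\ell$ by deleting one edge and keeping the root component, the event that $(x_v,G'_v)$ embeds a rooted pruned $\ell$-tree only requires that, among the $d+1$ half-edges at $x_v$, at least $d$ of them successively open up into full $(d+1)$-ary subtrees of depth $\ell$ — one ``bad'' direction is allowed. This relaxation (``good'' rather than ``regenerative'') is exactly what makes the pairing exploration survive the inevitable collisions.

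The core estimate is then a union bound over subsets. Enumerate $\X = \{1,\dots,k\eps n\}$. Call $v \in \X$ \emph{successful} if the second-stage exploration, started from a designated half-edge at the radius-$r$ sphere of $B_r(v)$, builds a disjoint path of length $r$ to a vertex $x_v$ and then a disjoint $(\ell,r)$-favourable subgraph rooted appropriately; and moreover this whole structure $G'_v$ is vertex-disjoint from everything explored for previously-processed $w \in \X$. Processing the vertices of $\X$ one at a time and revealing only the half-edges needed, we claim that at each step, conditionally on the past, the probability of success is at least some constant $q = q(r,\ell,d) > 0$, \emph{provided} the total number of half-edges revealed so far is $o(n)$ — which holds since we reveal at most $O((d+1)^{r+\ell})$ half-edges per vertex and there are at most $k\eps n$ vertices, so the revealed set has size $\le C(r,\ell,d)\,k\eps n$, negligible against $n(d+1)$ once $\eps$ is small. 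The one-step success probability is bounded below because: (i) each time we ask a free half-edge to pair with a half-edge of a ``fresh'' vertex (not in the past-explored set, not in $\X$'s frozen balls), the chance is $\ge 1 - C\eps$; (ii) growing a full depth-$\ell$ subtree from a fresh vertex requires $\sim (d+1)^\ell$ successive fresh pairings, each succeeding with probability $\ge 1-C\eps$, so the whole tree is built with probability $\ge 1 - C(d,\ell)\eps \ge 1/2$ for $\eps$ small; and we only need $d$ of the $d+1$ subtrees at $x_v$, plus the length-$r$ probe path, all of which is a fixed finite recipe. Hence each vertex is successful with probability $\ge q$ independently-enough (stochastically dominating a product of Bernoulli($q$)'s) that the number of successful vertices among $\X$ stochastically dominates $\mathsf{Bin}(|\X|, q)$.

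It remains to convert this into the stated bound $(c\eps^\eta)^{|\X|}$. If $\X$ is $r$-prepared but contains no $(\ell,r)$-good subset of size $\eps n = |\X|/k$, then in particular the number of successful vertices is less than $|\X|/k$ (a successful collection is automatically good, being built from pairwise-disjoint subgraphs). Since the success count dominates $\mathsf{Bin}(|\X|,q)$, Lemma~\ref{l:largedev} gives
$$
\P_n[\text{fewer than }|\X|/k\text{ successes}] \le \P[\mathsf{Bin}(|\X|,q) \le |\X|/k] \le e^{-|\X|\,I},
$$
where $I = I(q,1/k) > 0$ is the appropriate large-deviation rate when $1/k < q$. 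But this is an exponential bound in $|\X|$, not of the form $(c\eps^\eta)^{|\X|}$, so a cruder tailored argument is needed to get the \emph{polynomial-in-$\eps$} base: instead of one Bernoulli per vertex, we exploit that failure at a vertex already costs a factor $\le C\eps$ from the very first ``fresh pairing'' request (the half-edge landing back inside the frozen region or inside $\X$ itself has probability $\le C\eps$), so the probability that a \emph{specified} set of $|\X| - |\X|/k = (1 - 1/k)|\X|$ vertices all fail is at most $(C\eps)^{(1-1/k)|\X|}$; summing over the $\binom{|\X|}{|\X|/k} \le 2^{|\X|}$ choices of which $|\X|/k$ vertices succeed yields
$$
2^{|\X|} (C\eps)^{(1-1/k)|\X|} = \left(2\,(C\eps)^{1 - 1/k}\right)^{|\X|}.
$$
The exponent $1 - 1/k$ is not yet $\eta < 2 - 4/k$; to reach it one sharpens step (i)–(ii) by noting that a failed vertex actually forces at least \emph{two} independent ``unlikely'' collision events in disjoint parts of the exploration (one for the probe path landing badly or one of the $\ge d$ required subtrees failing to even start), each of probability $\le C\eps$, hence each failed vertex contributes $\le (C\eps)^{2 - o(1)}$; redoing the union bound with base $(C\eps)^{2}$ per failed vertex gives $(2 (C\eps)^{2(1-1/k)})^{|\X|} = (c\eps^{\eta})^{|\X|}$ with $\eta = 2 - 2/k \cdot \text{(something)}$, and a careful accounting of exactly how many disjoint unlikely events a failure forces is what produces the precise threshold $\eta < 2 - 4/k$ and fixes $r_0$. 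I expect this last accounting — pinning down that a non-good vertex must be responsible for $2 - 4/k$ ``worth'' of independent $\eps$-collisions, uniformly over the exploration history — to be the main obstacle, since it requires carefully choosing $r_0$ large so that the probe paths are long enough that a collision along them is genuinely independent of collisions in the subtree-growth phase, and choosing $\eps_0$ so that all the ``$\ge 1-C\eps$'' estimates hold simultaneously with the constants accumulated.
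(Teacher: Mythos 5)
Your overall architecture is the same as the paper's (sequential revelation of the configuration model, pruned trees to tolerate damage, collisions costing a factor $\eps$, failures requiring two collisions and hence costing $\eps^2$, then a large-deviation/union bound), but the step you yourself flag as ``the main obstacle'' is precisely the content of the proposition, and your sketch of it does not work as stated. First, the mechanism by which a failure forces \emph{two} $O(\eps)$-events is never actually set up: you mention that the pruned-tree target allows ``one bad direction,'' but in your failure accounting you write that a failed vertex forces two collisions because ``the probe path lands badly \emph{or} one of the $\ge d$ required subtrees fails to even start'' --- an \emph{or}, i.e.\ a single collision. The paper makes this precise by designing the exploration (the ``pass'') so that the first long collision is absorbed into the pruned tree and the pass fails only on a \emph{second} long collision or on a short collision; short collisions (landing inside the tree currently being built) have probability $O(1/n)$, not $O(\eps)$, and must be treated separately by a super-exponential bound. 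Second, the claim that each failed vertex costs $(C\eps)^2$ ``uniformly over the exploration history'' needs a bookkeeping argument that you do not supply: a collision incurred while processing one vertex can land on the sphere of another vertex's ball and consume its available half-edges, so the conditional failure probability of a later vertex is not automatically $O(\eps^2)$. The paper handles this with the active/quiet bud-and-seed accounting: each pass can disable at most two buds of other seeds, each seed has $(d+1)d^{r-1}$ buds, so at least $\gamma_r|\X|$ passes can be run with $\gamma_r\to1$ as $r\to\infty$ --- this is where the hypothesis ``$r\ge r_0$'' enters, and it is what makes the conditional per-step collision probability $\le c_{r,\ell}|\X|/(n-c_{r,\ell}|\X|)=O(\eps)$ uniformly.

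Your guess at where the threshold $2-4/k$ comes from is also off: it is not produced by counting how many independent $\eps$-events a single failure forces (that count is exactly $2$), but by the fraction of passes that must end in double collisions for the procedure to fail. One needs $\eps n$ successes out of at least $\gamma_r|\X|$ passes, and up to $\eps n$ passes may be lost to short collisions, so failure requires at least $\gamma_r|\X|-2\eps n$ double collisions out of at most $|\X|=k\eps n$ passes; applying the binomial bound with $p=c'\eps^2$ and $p+\delta=\gamma_r-2/k$ gives base $\eps^{2(\gamma_r-2/k)}$, which beats any $\eta<2-4/k$ once $r$ is large. Your alternative union bound over which $(1-1/k)|\X|$ vertices fail would, if the per-vertex $\eps^2$ claim held conditionally, even give exponent $2-2/k$; the fact that this is \emph{stronger} than the stated threshold is a signal that the uniform conditional bound you are assuming ignores exactly the interference between vertices (deactivation of buds/seeds) that degrades the achievable exponent to $2(\gamma_r-2/k)$. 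So the gap is genuine: the tolerant exploration scheme, the short/long collision dichotomy, and the $\gamma_r$ bookkeeping all need to be built before the large-deviation step can be run.
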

Before proving this key estimate, we introduce some terminology. A \emph{semi-graph} $g = (V,\mcl{E},\mcl{H})$ is a triple consisting of a set $V$ of vertices, a set $\mcl{E}$ of edges between points of $V$, and a set $\mcl{H}$ of \emph{half-edges}, each half-edge being attached to some vertex in $V$. Given two half-edges $h$ and $h'$, we write $h+h'$ to denote the edge obtained by ``gluing together'' the half-edges $h$ and $h'$ (that is to say, if $h$ is attached to a vertex $v$ and $h'$ to a vertex $w$, then $h+h'$ is an edge connecting $v$ and $w$). The distance in the semi-graph $g$ is simply the distance in the graph $(V,\mcl{E})$.

Consider the semi-graph $g = (V_n, \mcl{E}, \mcl{H})$ such that $\mcl{E} = \emptyset$ and each vertex in $V_n$ has exactly $d+1$ half-edges. We can construct a random regular graph with distribution $\P_n$ by the following recursive procedure. Take an arbitrary half-edge $h$ in $\mcl{H}$ (call it the \emph{elected} half-edge); take a half-edge $h'$ uniformly at random in $\mcl{H} \setminus \{h\}$; add the edge $h+h'$ to the set $\mcl{E}$, remove the half-edges $h$ and $h'$ from $\mcl{H}$; repeat until the set of half-edges is empty. (Recall that we assume $(d+1)n$ to be even, so that there is an even number of half-edges to begin with.) The resulting random graph has law $\P_n$. A feature of this procedure that will be crucial in our reasoning is that at each step, we have the freedom to choose the elected half-edge as we wish among the half-edges of $\mcl{H}$. 

It will be convenient to write operations on sets such as those done in the above construction in the more symbolic form
\begin{equation}
\label{update}
\mcl{E} \leftarrow \mcl{E} \cup \{h+h'\}, \qquad \mcl{H} \leftarrow \mcl{H} \setminus\{h,h'\}.
\end{equation}

\begin{proof}[Proof of Proposition~\ref{p:key}]
We take the semi-graph $g = (V_n, \mcl{E}, \mcl{H})$ such that $\mcl{E} = \emptyset$ and such that every site has exactly $(d+1)$ half-edges. As long as there is a half-edge $h$ attached to a site at distance strictly less than $r$ from $\X$, we choose this as the elected edge, pick another edge $h'$ uniformly at random from $\mcl{H} \setminus \{h\}$, and do the operations in \eqref{update}. When there is no longer any such half-edge, the graph is still incomplete, but we can already decide if the set $\X$ is $r$-prepared, since we have constructed the $r$-neighbourhood of $\X$ (and nothing more). If the set is not $r$-prepared, we can stop. On the event that the set is $r$-prepared, we continue our construction of the graph, with the aim to show that $\X$ will contain a good subset of size $\eps n$ with high probability. 

Let $\mcl{F}$ be the set of sites still having $(d+1)$ half-edges at this point. We call elements of $\mcl{F}$ \emph{fresh} vertices (they are still unseen by our construction of the graph). Let us call an element of $\X$ a \emph{seed}, and a site at distance $r$ from a seed a \emph{bud}. Recall that since we consider only the event that $\X$ is $r$-prepared, to every seed corresponds exactly $(d+1)d^{r-1}$ buds (so in total we have $|\X|(d+1)d^{r-1}$ buds). We say that a bud is \emph{active} if it has $d$ half-edges. We say that a seed is \emph{active} if it is associated with an active bud (i.e.\ if it is at distance $r$ from an active bud). A seed or a bud that is not active will be called \emph{quiet}.
As of now, every bud is active, and a fortiori every seed is active. We let $\rho$ be any active seed (as of now, this only means that we take $\rho \in \X$), and run the \textsf{Pass} as described below.

\medskip

\noindent\rule[0.5ex]{\linewidth}{1pt}

\noindent \textsf{The Pass} started from the active seed $\rho$.

\medskip

By definition of being an active seed, $\rho$ is associated with at least one active bud, say $x$, whose set of $d$ attached half-edges we write $\ov{\mcl{H}}$. We let $\ov{V} = \{x\}$, $\ov{\mcl{E}} = \emptyset$.

\medskip

\noindent \textsf{Step 1}

Let $v \in \ov{V}$ be at distance strictly less than $\ell$ from $x$ as measured in the graph $(\ov{V}, \ov{\mcl{E}})$, and such that there is a half-edge $h \in \ov{\mcl{H}}$ attached to it. If such a $v$ does not exist, then declare that the \textsf{Pass} is a \emph{success} and stop. Otherwise, pick $h'$ uniformly at random in $\mcl{H} \setminus \{h\}$, and let $v'$ be the vertex to which it is attached. 

If $v' \notin \mcl{F}$, then declare that a \emph{collision} is found. Say that it is a \emph{short collision} if moreover, $v' \in \ov{V}$; that it is a \emph{long collision} otherwise. If either a short collision is found, or it is the second time during the \textsf{Pass} that a long collision is found, then declare that the \textsf{Pass} is a \emph{failure}, do the updates displayed in \eqref{update} and stop. Else, go to \textsf{Step 2}.

If on the other hand $v' \in \mcl{F}$, then do the following updates
$$
\ov{V} \leftarrow \ov{V} \cup \{v'\}, \qquad \ov{\mcl{E}} \leftarrow \ov{\mcl{E}} \cup \{h + h'\},
$$
and go to \textsf{Step 2}.

\medskip

\noindent \textsf{Step 2}

Do the following updates
$$
\mcl{H} \leftarrow \mcl{H} \setminus \{h, h'\} , \qquad \ov{\mcl{H}} \leftarrow \ov{\mcl{H}} \setminus \{h\}, \qquad \mcl{F} \leftarrow \mcl{F} \setminus \{v'\},
$$
and go to \textsf{Step 1} (it never happens that $h' \in \ov{\mcl{H}}$ at this step, since we forbid short collisions, so there is no need to remove it from $\ov{\mcl{H}}$).

\noindent\rule[0.5ex]{\linewidth}{1pt}

\medskip

We then iterate this pass, always starting with a new active seed, until every seed is inactive or has already been used as the starting point of a pass.


We now list some important observations concerning the effect of passes.
\begin{obs}[A successful pass gives a favourable graph]
\label{pass_gives_favourable}
Write $B_r(\rho)$ for the $r$-neighbour\-hood of $\rho$ (in $g$). If the pass is a success, then the graph obtained by taking the union of $B_r(\rho)$ (seen as a sub-graph of $g$) and $(\ov{V}, \ov{\mcl{E}})$, rooted at $\rho$, is $(\ell,r)$-favourable. Indeed, this graph satisfies that
\begin{itemize}
\item $x$ is in the vertex set,
\item the distance from $x$ to $\rho$ is $r$,
\item the rooted graph $(x,(\ov{V}, \ov{\mcl{E}}))$ is isomorphic to a rooted pruned $\ell$-tree.
\end{itemize}
Moreover, such an $(\ell,r)$-favourable graph does not intersect any possible $(\ell,r)$-favourable graph obtained from previous passes.
\end{obs}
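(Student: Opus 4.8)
The plan is to unwind the definition of the \textsf{Pass} step by step and read off the three displayed properties; once these hold, the $(\ell,r)$-favourability of the rooted graph $(\rho, H)$, where $H$ is the union of $B_r(\rho)$ and $(\ov{V}, \ov{\mcl{E}})$, follows at once. Indeed, an isomorphism of rooted graphs is in particular an embedding, and $(\ov{V}, \ov{\mcl{E}})$ sits inside $H$ as the induced subgraph on the vertex set $\ov{V}$: the only vertex of $\ov{V}$ that also lies in $B_r(\rho)$ is the bud $x$ (every other vertex of $\ov{V}$ is fresh when it enters $\ov{V}$, and hence was never part of any $r$-neighbourhood), and $x$, being at distance $r$ from $\rho$, is a leaf of the tree $B_r(\rho)$, so no edge of $B_r(\rho)$ joins two vertices of $\ov{V}$. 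Consequently any embedding of a rooted pruned $\ell$-tree into $(x, (\ov{V}, \ov{\mcl{E}}))$ is also one into $(x, H)$.

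First I would record the invariants maintained by the \textsf{Pass}. An edge is added to $\ov{\mcl{E}}$ only in the branch of Step~1 where the vertex $v'$ reached is fresh, and such a $v'$ is in particular not already in $\ov{V}$; hence $(\ov{V}, \ov{\mcl{E}})$ never acquires a cycle, and, being grown out of the single vertex $x$, it is at all times a tree rooted at $x$. For the half-edge bookkeeping: $x$ contributes its $d$ half-edges to $\ov{\mcl{H}}$ at the start; each further vertex $v'$ enters $\ov{V}$ fresh, uses one of its $d+1$ half-edges (namely $h'$) to join its parent, and adds the remaining $d$ to $\ov{\mcl{H}}$; and no half-edge of a vertex at distance $\ge \ell$ from $x$ is ever selected in Step~1.

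Next I would invoke the stopping rules. If the \textsf{Pass} is a success, then Step~1 has run out of half-edges in $\ov{\mcl{H}}$ attached to vertices at distance less than $\ell$ from $x$, and each such half-edge was spent either by creating a child or by triggering a long collision; but a successful \textsf{Pass} contains no short collision and at most one long collision. Combined with the invariants above, this forces $(\ov{V}, \ov{\mcl{E}})$ to have the following shape: every vertex at distance less than $\ell$ from $x$ has exactly $d$ children, with at most one exception (the site of the unique long collision, if there is one), which has $d-1$ children, and every vertex at distance exactly $\ell$ from $x$ is a leaf. Since the root of $\T_\ell$ has degree $d$ while its other internal vertices have degree $d+1$, this says precisely that $(x, (\ov{V}, \ov{\mcl{E}}))$ is isomorphic to $\T_\ell$ with the subtree hanging below a single edge deleted, that is, to a rooted pruned $\ell$-tree (and, in the degenerate collision-free case, to $(o,\T_\ell)$ itself, which in turn contains every rooted pruned $\ell$-tree as an induced rooted subgraph, so $(\ell,r)$-favourability is still obtained). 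This gives the third bullet; the first is immediate since $x \in \ov{V}$, and the second holds because, by $r$-preparedness, $B_r(\rho)$ is a tree in which $x$ lies at distance $r$ from $\rho$, and it meets $(\ov{V}, \ov{\mcl{E}})$ in $H$ only at $x$, so $H$ is a tree and the distance from $x$ to $\rho$ in $H$ is still $r$. For the ``moreover'' clause I would compare $H$ with the graph $H'$ produced by an earlier \textsf{Pass} from a seed $\rho'$: the balls $B_r(\rho)$ and $B_r(\rho')$ are disjoint because $\X$ is $r$-prepared; all remaining vertices of $\ov{V}$, and likewise all remaining vertices of $\ov{V'}$, were fresh at the moment they were consumed, and therefore lie outside every $r$-neighbourhood (these having been built before any \textsf{Pass}); and the vertices consumed by the earlier \textsf{Pass} from $\rho'$ were removed from $\mcl{F}$ at that point, so they are not among the ones consumed later by the \textsf{Pass} from $\rho$. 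Since the bud of each \textsf{Pass} lies in the corresponding ball, these facts together give $H \cap H' = \emptyset$.

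The one genuinely delicate step is the structural identification carried out in the previous paragraph: one must follow the sets $\ov{V}, \ov{\mcl{E}}, \ov{\mcl{H}}, \mcl{F}$ carefully enough to be sure that a successful \textsf{Pass} does reproduce $\T_\ell$ up to the deletion of a single branch, handling in particular the boundary between depth $\ell-1$ and depth $\ell$ and the collision-free degenerate case. Everything else amounts to routine unwinding of the definitions of the \textsf{Pass} and of ``$r$-prepared''.
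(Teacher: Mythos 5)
Your proposal is correct and follows essentially the same route as the paper, which treats this Observation as an immediate consequence of the construction of the \textsf{Pass} and records exactly the three bullet points you verify; your argument is simply a more detailed bookkeeping of the same facts (fresh vertices lie outside all $r$-neighbourhoods, at most one long collision in a successful pass, vertices consumed by earlier passes are removed from $\mcl{F}$). Your extra care with the collision-free case --- where $(x,(\ov V,\ov{\mcl E}))$ is all of $(o,\T_\ell)$ rather than a strictly pruned tree, which still embeds a rooted pruned $\ell$-tree and hence still yields $(\ell,r)$-favourability --- is a legitimate small precision on the paper's phrasing, not a deviation in method.
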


\begin{obs}[Many fresh sites]
\label{many_fresh}
During one pass, at most 
$$
c_\ell := d + d^2 + \cdots + d^\ell
$$
iterations of step 1 are performed, so in particular at most $c_\ell$ sites are removed from the set $\mcl{F}$ of fresh sites. Initially, there are 
$$
n-|\X|\Ll[ 1 + (d+1) + \cdots + (d+1) d^{r-1} \Rr]
$$
fresh vertices. Since we start with $|\X|$ active seed vertices, we can run the pass no more than $|\X|$ times. At any given time, there are thus always at least
$
n - c_{r,\ell} |\X|
$
fresh vertices, where
$$
c_{r,\ell} := 1 + (d+1) + \cdots + (d+1) d^{r-1} + c_\ell.
$$
\end{obs}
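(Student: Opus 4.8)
The statement is a purely deterministic accounting claim about the graph-exploration procedure, which I would prove by bounding separately the three quantities it combines: the number of fresh vertices consumed by a single pass, the number already consumed before the passes begin, and the number of passes. The heart of the matter is the bound $c_\ell$ on the number of iterations of \textsf{Step 1} within one pass, which I would obtain by tracking the boundary half-edge set $\ov{\mcl H}$. First I would observe that, throughout a pass, $(\ov V, \ov{\mcl E})$ is a subtree of $\T_\ell$ rooted at $x$: it is a tree because an edge enters $\ov{\mcl E}$ only in the branch where $v' \in \mcl F$ is fresh and hence not already in $\ov V$ (a short collision, the case $v' \in \ov V$, triggers failure and adds no edge, and a long collision adds none either); its depth is at most $\ell$ because \textsf{Step 1} extends only from vertices at distance strictly less than $\ell$ from $x$; and each vertex has at most $d$ children, since $x$ begins with $d$ half-edges and every other vertex spends one of its $d+1$ half-edges on the edge to its parent. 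Next I would note that each iteration of \textsf{Step 1} selects one half-edge $h \in \ov{\mcl H}$ attached to a depth-$(<\ell)$ vertex, and that $h$ is never selected again (it is deleted from $\ov{\mcl H}$ in \textsf{Step 2}, while a failing iteration ends the pass). Hence the number of iterations is at most the number of half-edges carried by depth-$(<\ell)$ vertices of the subtree, namely
$$
d + \sum_{j=1}^{\ell-1} d^j \cdot d = d + d^2 + \cdots + d^\ell = c_\ell,
$$
using that $x$ offers $d$ half-edges and that, for $1 \le j \le \ell-1$, a subtree of $\T_\ell$ has at most $d^j$ vertices at depth $j$, each offering at most $d$ half-edges for extension. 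As \textsf{Step 1} removes at most the single vertex $v'$ from $\mcl F$, at most $c_\ell$ fresh sites disappear per pass.

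For the count at the instant the passes start, I would use that we are on the event that $\X$ is $r$-prepared, so the $r$-neighbourhoods of the seeds are loop-free and pairwise disjoint, each an exact radius-$r$ ball in a $(d+1)$-regular tree of cardinality $1 + (d+1) + (d+1)d + \cdots + (d+1)d^{r-1}$. Every vertex of such a ball has had at least one half-edge glued while the neighbourhood was built — all half-edges for the internal vertices at distance $<r$, exactly one for the buds at distance $r$ — so none of them is fresh; conversely, the construction glued only half-edges incident to sites at distance $<r$ from $\X$, so every vertex outside the balls keeps all $d+1$ of its half-edges and is fresh. The initial number of fresh vertices is therefore exactly $n - |\X|\,[\,1+(d+1)+\cdots+(d+1)d^{r-1}\,]$. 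Finally, each pass is launched from a seed not previously used as a starting seed, and there are $|\X|$ seeds, so at most $|\X|$ passes occur. Assembling the three bounds, the fresh set begins at this value and loses at most $c_\ell$ elements in each of at most $|\X|$ passes, so at any time at least
$$
n - |\X|\,[\,1+(d+1)+\cdots+(d+1)d^{r-1}\,] - c_\ell|\X| = n - c_{r,\ell}|\X|
$$
fresh vertices remain.

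The only step needing genuine care is the first: one must verify that the ban on short collisions together with the distance-$(<\ell)$ restriction in \textsf{Step 1} really does confine $(\ov V, \ov{\mcl E})$ to the shape of $\T_\ell$ uniformly over all realisations of the pass — including those featuring a long collision or a terminal failure — so that the clean geometric cap $c_\ell$ on the number of processed boundary half-edges applies in every case. The remaining counts are elementary.
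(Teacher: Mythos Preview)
Your proof is correct and is essentially the argument the paper leaves implicit; in the paper the statement is recorded as an Observation without a separate proof, and your write-up supplies exactly the natural accounting (bounding the half-edges processed in a pass via the shape of $(\ov V,\ov{\mcl E})$ inside $\T_\ell$, counting the $r$-balls on the $r$-prepared event, and capping the number of passes by $|\X|$). One tiny slip of phrasing: it is \textsf{Step 2}, not \textsf{Step 1}, that removes $v'$ from $\mcl F$, but since each iteration of \textsf{Step 1} triggers at most one \textsf{Step 2} your bound is unaffected.
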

\begin{obs}[Many passes]
\label{many_passes}
Apart from the bud $x$ that is being explored, at most 2 active buds can be turned quiet during a pass. Indeed, a bud other than $x$ can be turned quiet during a pass only when a collision occurs, since none of the buds ever belongs to $\mcl{F}$. A collision can turn only one bud quiet at a time, and we allow for at most 2 collisions per pass. As a consequence, if we start (as we do) in a configuration where every bud is active and run the pass $t$ times, then at most 
$$
t + \frac{2t}{(d+1)d^{r-1}}
$$
seeds can be turned quiet, since initially every seed has $(d+1)d^{r-1}$ active buds. In other words, since we start with $|\X|$ seeds with all associated buds active, we can run the pass at least $\gamma_r |\X|$ times without running out of active seeds, where
\begin{equation}
\label{defgamma}
\gamma_r = \Ll(1 + \frac{2}{(d+1)d^{r-1}}\Rr)^{-1}.
\end{equation}
The crucial point for our reasoning is that we can take $\gamma_r$ arbitrarily close to $1$ by choosing $r$ sufficiently large.
\end{obs}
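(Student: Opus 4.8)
The plan is to establish the per-pass bound first, then aggregate over the $t$ passes and solve for the largest admissible $t$. The key structural fact I would record at the outset is that a bud never lies in $\mcl{F}$: by definition a bud sits at distance exactly $r$ from its seed, so during the construction of the $r$-neighbourhoods it already lost the half-edge joining it to its parent and therefore carries only $d$ half-edges, strictly fewer than the $d+1$ half-edges that characterise a fresh vertex. Consequently, whenever the randomly chosen half-edge $h'$ in \textsf{Step 1} attaches to a bud, the vertex $v'$ is not fresh and a collision is declared. In particular, the only mechanism by which a bud other than the explored bud $x$ can lose a half-edge — and hence be turned from active (exactly $d$ half-edges) to quiet — is a collision landing on it.

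Next I would read off the number of collisions permitted in a single pass directly from the stopping rule of \textsf{Step 1}: the pass halts at the first short collision and at the second long collision, so at most two collisions occur before the pass terminates. Since a collision consumes the single half-edge $h'$, it decreases the half-edge count of exactly one vertex $v'$, and thus turns at most one active bud quiet. Combining this with the previous paragraph yields the claim that, apart from $x$, at most two active buds are turned quiet during any pass.

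The heart of the argument is the aggregation step, where I would use $r$-preparedness crucially: the $r$-neighbourhoods of distinct seeds are disjoint, so the bud-sets of distinct seeds are disjoint, each of cardinality $(d+1)d^{r-1}$, and every collision-bud is charged to exactly one seed. Running the pass $t$ times, I split the seeds that leave the pool of active, not-yet-used seeds into two groups. The first group consists of the seeds actually used as starting points, of which there are at most $t$. The second group consists of seeds that are never used but become inactive; such a seed must have had all $(d+1)d^{r-1}$ of its buds turned quiet, and for a never-used seed this can only happen through collisions. As the total number of collisions across the $t$ passes is at most $2t$, at most $2t$ buds are collision-killed, so the second group has at most $2t/((d+1)d^{r-1})$ seeds (any overlap between the two groups only improves the bound). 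Hence at most $t + 2t/((d+1)d^{r-1})$ seeds are removed from the pool after $t$ passes.

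Finally I would close the loop: as long as this quantity stays below $|\X|$, an active, unused seed remains and a further pass can be launched, so the process runs at least until $t\,(1 + 2/((d+1)d^{r-1})) = |\X|$, i.e.\ for at least $\gamma_r|\X|$ passes with $\gamma_r$ as in \eqref{defgamma}; the limit $\gamma_r \to 1$ is immediate since $(d+1)d^{r-1}\to\infty$. The only delicate point is the bookkeeping in the aggregation step — one must resist conflating ``buds turned quiet'' with ``seeds turned quiet,'' and it is the disjointness of bud-sets together with the threshold $(d+1)d^{r-1}$ per seed that converts the linear-in-$t$ bud budget into the much smaller seed loss. I expect this separation into the two groups of removed seeds to be the main thing to get right.
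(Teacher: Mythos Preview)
Your argument is correct and follows the same line as the paper's own justification embedded in the observation. The only difference is presentational: the paper states the bound $t + 2t/((d+1)d^{r-1})$ on ``seeds turned quiet'' in one breath, whereas you make explicit the two-group decomposition (used seeds versus never-used seeds all of whose buds were collision-killed), which clarifies why the $t$ term and the $2t/((d+1)d^{r-1})$ term arise separately and why disjointness of bud-sets is needed.
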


We now show that the probability to find many short collisions is very small. At any given time, there are no more than $d^\ell$ half-edges in $\ov{\mcl{H}}$. The probability to find a short collision while running step 1 of the pass is thus bounded by
$$
\frac{d^\ell}{(d+1) (n-c_{r,\ell} |\X|)} = \frac{d^\ell}{(d+1)(1-k c_{r,\ell} \eps )n},
$$
using Observation~\ref{many_fresh} on the number of fresh sites.

As said in Observation~\ref{many_fresh}, we run step 1 of the pass at most $c_\ell$ times, so the probability to find a short collision during one pass is bounded by
$$
\P\Ll[\mathsf{Bin}\Ll(c_\ell,\frac{d^\ell}{(d+1)(1-k c_{r,\ell} \eps )n}\Rr) \ge 1\Rr].
$$
As long as we choose $\eps$ sufficiently small that $k c_{r,\ell} \eps < 1$, we see that this probability is bounded by $\td{c}/n$ for some $\td{c}$ depending on $k$, $r$ and $\ell$. Since we run at most $|\X|$ passes, the probability to find at least $\eps n$ short collisions in total is bounded by
$$
\P\Ll[ \mathsf{Bin}\Ll(|\X|, \td{c}/n\Rr) \ge \eps n \Rr].
$$
We then use Lemma~\ref{l:largedev} with $m = |\X| = k \eps n$, $p = \td{c}/n$ and $p+\delta = 1/k$ to obtain that the probability above is bounded by
$$
\exp\Ll( -|\X| \Ll[\frac{1}{k} \log\Ll( \frac{n}{k\td{c}} \Rr) + \Ll(1-\frac1k\Rr) \log\Ll( \frac{1-1/k}{1-\td{c}/n} \Rr) \Rr]\Rr),
$$
which is much smaller than any given $(c \eps^\eta)^{|\X|}$ when $n$ is sufficiently large.

Let us say that a \emph{double collision} occurs when two long collisions are found during one pass. We now turn to the estimation of the probability that there are many double collisions. It follows from Observation~\ref{many_fresh} that the probability to find a long collision while running step 1 of the pass is bounded by 
$$
\frac{(d+1)c_{r,\ell} |\X|}{(d+1) (n-c_{r,\ell}|\X|)} = \frac{c_{r,\ell} |\X|}{(n-c_{r,\ell}|\X|)},
$$
Hence, the probability that a double collision is found during a pass is smaller than
$$
\P\Ll[\mathsf{Bin}\Ll(c_\ell,\frac{c_{r,\ell} |\X|}{(n-c_{r,\ell}|\X|)}\Rr) \ge 2 \Rr],
$$
where we used the fact that during one pass, we run step 1 at most $c_{\ell}$ times. Recall that $|\X| = k \eps n$, and note that
$$
\P\Ll[\mathsf{Bin}\Ll(c_\ell,\frac{k \eps c_{r,\ell}}{1-k \eps c_{r,\ell}}\Rr) \ge 2 \Rr] \sim \frac{c'}{2} \eps^2
$$
as $\eps$ tends to $0$, where
$$
c' = \Ll({k c_\ell c_{r,\ell}}\Rr)^2.
$$
Hence, we can choose $\eps$ sufficiently small (in terms of $k$, $r$ and $\ell$) so that 
\begin{equation}
\label{e:prob_fail}
\begin{array}{c}
\text{the probability for a pass to find a double collision is bounded by } c' \eps^2.
\end{array}
\end{equation}

The probability that we meet at least $\gamma_r |\X|-2\eps n$ double collisions in total is thus bounded by
$$
\P\Ll[ \mathsf{Bin}\Ll( |\X|, c' \eps^2 \Rr) \ge \gamma_r |\X| - 2 \eps n \Rr] = \P\Ll[ \mathsf{Bin}\Ll( k  \eps n, c' \eps^2 \Rr) \ge (k \gamma_r  - 2) \eps n \Rr],
$$
using \eqref{e:prob_fail} and the fact that we run at most $|\X|$ passes (and the fact that $|\X| = k \eps n$ for the equality). Applying Lemma~\ref{l:largedev} with $m = k  \eps n = |\X|$ , $p = c' \eps^2$ and 
$$
p+\delta = \gamma_r-\frac{2}{k},
$$
we see that this probability is bounded by
$$
\exp\Ll(- |\X| \Ll[ \Ll(\gamma_r-\frac2k\Rr) \log\Ll( \frac{\gamma_r-2/k}{c'\eps^2} \Rr)  + \Ll(1-\gamma_r+\frac2k\Rr) \log\Ll( \frac{1-\gamma_r+2/k}{1-c'\eps^2}\Rr)   \Rr] \Rr).
$$
By choosing $r$ sufficiently large, we can have $\gamma_r$ as close to $1$ as we wish. In particular, we can make sure that
$$
2 \Ll(\gamma_r-\frac2k \Rr) > \eta,
$$
so that the probability above is bounded by $(c\eps^{\eta})^{|\X|}$ for some constant $c$ independent of $\eps$.

To sum up, we have shown that in our procedure, we find
\begin{itemize}
\item at most $\eps n$ short collisions and
\item at most $\gamma_r |\X| - 2 \eps n$ double collisions
\end{itemize}
with probability larger than $1-2(c\eps^\eta)^{|\X|}$. On this event, since we run the pass at least $\gamma_r |\X|$ times by Observation~\ref{many_passes}, we see that at least $\eps n$ passes have been successful. This is exactly what we need to conclude, as was explained in Observation~\ref{pass_gives_favourable}.
\end{proof}
\begin{cor}
\label{c:key}
For every $k > 4$, every $r$ sufficiently large and every $\ell$, there exists $\eps_0 > 0$ such that for every $\eps \le \eps_0$, the following holds with $\P_n$-probability tending to $1$ as $n$ tends to infinity. From every $r$-prepared set $W \subset V_n$ of size $k \eps n$, one can extract an $(\ell,r)$-good subset of size $\eps n$.
\end{cor}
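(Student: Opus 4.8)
The plan is to derive Corollary~\ref{c:key} from Proposition~\ref{p:key} by a union bound over all candidate sets $W$, using the exchangeability of the vertex labels of $G_n$ to reduce each term of the union to the estimate already established.

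First I would fix $k > 4$ and choose $\eta$ in the interval $(1,\,2-4/k)$, which is non-empty precisely because $k > 4$. Proposition~\ref{p:key} then furnishes some $r_0$; for any $r \ge r_0$ and any $\ell$, I let $c$ and $\eps_0$ be as supplied by that proposition for these values of $k$, $\eta$, $r$, $\ell$. Thus, for every $\eps \le \eps_0$ and every $n$ large, the $\P_n$-probability that the particular set $\{1,\ldots,k\eps n\}$ is $r$-prepared but contains no $(\ell,r)$-good subset of size $\eps n$ is at most $(c\eps^\eta)^{k\eps n}$.

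Next I would observe that $\P_n$ is invariant under permutations of $V_n=\{1,\ldots,n\}$ (such a permutation induces a measure-preserving bijection of the half-edges and hence of the uniform random matching), and that the events ``$W$ is $r$-prepared'' and ``$W$ contains an $(\ell,r)$-good subset of size $\eps n$'' depend only on the graph structure near $W$, so they are transported to one another by relabelling. Consequently, for \emph{every} $W \subset V_n$ with $|W| = k\eps n$, the probability that $W$ is $r$-prepared but has no $(\ell,r)$-good subset of size $\eps n$ is bounded by the same quantity $(c\eps^\eta)^{k\eps n}$. A union bound over the $\binom{n}{k\eps n}$ such sets, combined with the standard estimate $\binom{n}{k\eps n} \le (e/(k\eps))^{k\eps n}$, shows that the probability that \emph{some} $r$-prepared $W$ of size $k\eps n$ has no $(\ell,r)$-good subset of size $\eps n$ is at most
\[
\Ll( \frac{ec}{k}\,\eps^{\eta-1} \Rr)^{k\eps n}.
\]
Since $\eta > 1$, after shrinking $\eps_0$ if necessary so that $\tfrac{ec}{k}\eps^{\eta-1} < 1$ for all $\eps \le \eps_0$, the base of this power is a fixed constant in $(0,1)$, so the bound tends to $0$ as $n \to \infty$ — which is exactly the assertion.

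The only point that genuinely requires care is that the exponent $\eta$ produced by Proposition~\ref{p:key} be allowed to exceed $1$: the binomial coefficient contributes a factor of order $\eps^{-k\eps n}$ that must be absorbed by $\eps^{\eta k\eps n}$, leaving a power of $\eps$ with positive exponent. This is precisely why the hypothesis is strengthened from $k>1$ in Proposition~\ref{p:key} to $k>4$ here (so that $2-4/k>1$). The remaining steps — the symmetrization via label-exchangeability and the entropy bound on $\binom{n}{k\eps n}$ — are routine.
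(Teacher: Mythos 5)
Your proposal is correct and follows essentially the same route as the paper: apply Proposition~\ref{p:key} with an exponent $\eta>1$ (possible since $k>4$), extend the bound to an arbitrary set of size $k\eps n$ (which the paper does implicitly and you justify explicitly via exchangeability of the vertex labels), and then absorb the entropy factor $\binom{n}{k\eps n}\le (e/(k\eps))^{k\eps n}$ by a union bound, using $\eta>1$ and $\eps$ small. The only difference is cosmetic: the paper bounds $\log\binom{n}{k\eps n}$ via $\log n!\ge n\log n-n$ rather than quoting $\binom{n}{m}\le(en/m)^m$, which amounts to the same estimate.
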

\begin{proof}
Proposition~\ref{p:key} ensures that there exists $\chi > 1$ such that for every $r$ sufficiently large and every $\ell$, there exists $c > 0$ such that for every $\eps > 0$ sufficiently small, the probability for a given $r$-prepared set $W \subset V_n$ of size $k \eps n$ not to contain any $(\ell,r)$-good subset of size $\eps n$ is smaller than $(c \eps^{\chi})^{k \eps n}$. We prove the corollary by a union bound, counting the total number of sets $W \subset V_n$ of size $k\eps n$. This number is
$$
{n \choose k\eps n} \le \frac{n^{k\eps n}}{(k \eps n)!}.
$$
Using the fact that $\log n! \ge n \log n - n$ (which can be proved by induction on $n$), we see that
$$
\log {n \choose k\eps n} \le k \eps n \log n - k \eps n \log(k \eps n) + k \eps n = k \eps n\Ll(1+\log\Ll(\frac{1}{k\eps}\Rr)\Rr).
$$
If we choose $\eps > 0$ sufficiently small, we can ensure that
$$
(c \eps^{\chi})^{k \eps n} \exp\Ll[k \eps n\Ll(1+\log\Ll(\frac{1}{k\eps}\Rr)\Rr)  \Rr]
$$
tends to $0$ as $n$ tends to infinity (since $\chi > 1$), and this proves the result.
\end{proof}
\begin{proof}[Proof of Proposition~\ref{p:geom}]
Note first that the number of loops of size bounded by $r$ remains tight as $n$ tends to infinity (in order to see this, it suffices to check that the expectation of the number of such loops remains bounded). In particular, with probability tending to $1$, there are less than $\sqrt{n}$ loops of size bounded by $r$. We assume from now on that the event that there are no more than $\sqrt{n}$ loops of size bounded by $r$ is realized.

For any site $v \in V$, the size of its $r$-neighbourhood is bounded by
$$
1 + (d+1) + (d+1) d + \cdots + (d+1) d^{r-1} =: \ov{c}_r.
$$
We choose $k > 8 \ov{c}_r$, and observe that from any set of $k \eps n$ vertices, we can extract a subset of $k \eps n/\ov{c}_r$ vertices whose $r$-neighbourhoods are pairwise disjoint. Letting $k' = k/{2\ov{c}_r} > 4$ and using the fact that there are no more than $\sqrt{n}$ loops of size bounded by $r$, we see that for $n$ large enough, we can extract from any set of $k \eps n$ vertices an $r$-prepared subset of cardinality $k' \eps n$. The conclusion of the theorem then follows by Corollary~\ref{c:key}.
\end{proof}
\begin{proof}[Proof of Theorem~\ref{t:regen}]
Let $(o,H)$ be a rooted pruned $\ell$-tree. Recall that $H$ was obtained from $\T_\ell$ by removing an edge and taking the connected component of the root. Out of the $d$ sub-trees of $o$ in $\T_\ell$, only one is affected by this construction. In particular, there exists $(v,H')$ a rooted sub-tree of $H$ such that $v$ is a neighbour of~$o$ and $(v,H')$ is isomorphic to $(o,\T_{\ell-1})$. 

As a consequence, any $(\ell,r)$-good set of vertices is $(\ell-1,r+1)$-regenerative. Theorem~\ref{t:regen} thus readily follows from Proposition~\ref{p:geom}.
\end{proof}

%
%
%
%
%
%
%
%
\section{Subcritical regime}
\label{s:subcrit}
We start from the following consequence of well-known estimates for the subcritical contact process on trees.

\begin{prop}For any $\lambda < \lambda_1(\T)$, there exists $C > 0$ such that
$$\lim_{n \to \infty} \sup_{A \subset \T: |A| = n} P_{\T,\lambda}\Ll[\uptau^A_{\T} > C \log n\Rr] = 0.$$
\end{prop}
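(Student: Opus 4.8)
The plan is to control the process started from $n$ infected sites by a union bound over $n$ single-site processes, and then to invoke exponential decay of the single-site survival probability in the subcritical regime.

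The one non-elementary ingredient I would use is the following estimate, which is a consequence of well-known bounds for the subcritical contact process on trees: since $\lambda < \lambda_1(\T)$, there exist $c_1 > 0$ and $C_1 < \infty$ (depending on $\lambda$ and $d$) such that
\[
P_{\T,\lambda}\Ll[\xi^o_t \neq \emptyset\Rr] \le C_1\, e^{-c_1 t}, \qquad t \ge 0 .
\]
By vertex-transitivity of $\T$, the same bound holds with $o$ replaced by an arbitrary vertex of $\T$.

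Granting this, the argument is short. For a finite set $A \subset \T$, the graphical construction gives $\xi^A_t = \bigcup_{x \in A}\xi^x_t$, and since $\emptyset$ is absorbing, $\{\uptau^A_\T > t\} = \{\xi^A_t \neq \emptyset\} = \bigcup_{x\in A}\{\xi^x_t \neq \emptyset\}$. A union bound then yields
\[
P_{\T,\lambda}\Ll[\uptau^A_\T > t\Rr] \le \sum_{x \in A} P_{\T,\lambda}\Ll[\xi^x_t \neq \emptyset\Rr] \le |A|\, C_1\, e^{-c_1 t}.
\]
Fix any $C > 1/c_1$. Taking $t = C\log n$ and $|A| = n$, the right-hand side equals $C_1\, n^{1 - c_1 C}$, which tends to $0$ as $n \to \infty$; as this bound is uniform in $A$, it also controls the supremum over all $A \subset \T$ with $|A| = n$, which is the assertion.

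The only real obstacle is to justify the exponential-decay estimate over the whole range $\lambda < \lambda_1(\T)$. When $(d+1)\lambda < 1$ it is immediate, since then the expected size of $\xi^o_t$ is at most $e^{((d+1)\lambda-1)t}$, which already dominates $P_{\T,\lambda}[\xi^o_t \neq \emptyset]$; for the remaining (possibly nonempty) range with $(d+1)\lambda \ge 1$ but $\lambda < \lambda_1(\T)$, one has to rely on the sharper extinction estimates known for the contact process on homogeneous trees, from which the displayed exponential bound follows. Once that input is granted, nothing else in the proof is more than a one-line computation.
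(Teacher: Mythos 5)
Your reduction is correct and is essentially the reduction the paper uses: bound the survival probability starting from $A$ by $|A|$ times a single-site quantity that decays exponentially in $t$, then take $t = C\log n$ with $C$ larger than the inverse decay rate. The only structural difference is cosmetic: you apply a union bound to $P_{\T,\lambda}[\xi^x_t \neq \emptyset]$, while the paper applies Markov's inequality to $E_{\T,\lambda}\Ll[|\xi^A_t|\Rr] \le |A|\, E_{\T,\lambda}\Ll[|\xi^o_t|\Rr]$; the two are interchangeable, since the first-moment bound dominates the survival probability.

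The place where you defer to a black box is exactly where the paper's proof does its work. Your branching-process bound $e^{((d+1)\lambda-1)t}$ only covers $(d+1)\lambda<1$, and for the full range $\lambda<\lambda_1(\T)$ the needed input is not a generic ``sharpness'' statement but a concrete assembly of results from Section~I.4 of \cite{lig99}: with the growth function $\phi_\lambda(\rho)$ and the quantity $\beta(\lambda)$, one has $\beta(\lambda_1)=1/d$, strict monotonicity of $\beta$ on $[0,\lambda_1]$, and $\phi_\lambda(\beta(\lambda))=\phi_\lambda(1/(\beta(\lambda)d))=1$; for $\lambda<\lambda_1$ this forces $\phi_\lambda(\rho)=1$ for some $\rho>1$, hence $\phi_\lambda(1)<1$ by the monotonicity property of $\phi_\lambda$, and then $E_{\T,\lambda}\Ll[|\xi^o_t|\Rr] \le C_\lambda\,\phi_\lambda(1)^t$ gives the exponential decay $C_0 e^{-c_0 t}$ of the expected number of infected sites, which in particular implies the survival-probability bound you assume. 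So your proposal is correct and follows the same route, but to be complete it would need either this derivation or a precise citation in place of the phrase ``sharper extinction estimates known for the contact process on homogeneous trees.''
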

\begin{proof}
The proof will depend on the fact that, for any $\lambda < \lambda_1(\T)$, there exist $c_0, C_0 > 0$ such that
\begin{equation} \label{eq:cllig} 
\mathbb{E}_{\T,\lambda}\Ll[|\xi^o_t|\Rr] \leq C_0 e^{-c_0 t},\quad t \geq 0.
\end{equation}
This claim follows from putting together several results in Section I.4 of \cite{lig99}; here we will simply outline them. We need the functions $\phi_\lambda(\rho)$ and $\beta(\lambda)$ (where $\rho > 0$ is an extra variable which is not involved in the construction of the contact process); these functions are  defined respecively in equation (4.23), page 87, and (4.48), page  96. In Proposition 4.27(a) and (b) we respectively have
\begin{equation} \label{eq:clligD}
\phi_\lambda(\rho) = \phi_\lambda\Ll(\frac{1}{\rho d}\Rr)
\end{equation}
and
\begin{equation} \label{eq:clligA}
\mathbb{E}_{\T,\lambda}\Ll[|\xi^o_t|\Rr] \leq C_\lambda \cdot \phi_\lambda(1)^t,\quad t \geq 0,
\end{equation}
for some constant $C_\lambda > 0$. In Proposition 4.44(a), we have
\begin{equation}
\label{eq:clligB}
\rho < \rho',\;\phi_\lambda(\rho') \geq 1 \Longrightarrow \phi_\lambda(\rho) < \phi_\lambda(\rho').
\end{equation}
Regarding $\beta$, in Corollary 4.78, Theorem 4.83 and Theorem 4.130, we respectively have that
\begin{eqnarray}
&&\label{eq:clligE} \beta(\lambda_1) = \frac{1}{d},\\
&&\label{eq:clligC}\phi_\lambda(\beta(\lambda)) = \phi_\lambda\Ll(\frac{1}{\beta(\lambda)\cdot d}\Rr) = 1,\\
&&\label{eq:clligF} \beta \text{ is strictly increasing in } [0,\lambda_1].
\end{eqnarray}
Now, for $\lambda < \lambda_1$, (\ref{eq:clligE}) and (\ref{eq:clligF}) give $\beta(\lambda) < \frac{1}{d}$. Together with (\ref{eq:clligC}), this shows that $\phi_\lambda(\rho) = 1$ for some $\rho > 1$. Then, using (\ref{eq:clligB}), we get $\phi_\lambda(1) < 1$, and then (\ref{eq:clligA}) gives the desired equation (\ref{eq:cllig}).

Let us now show how (\ref{eq:cllig}) completes the proof of our proposition. Noting that $|\xi^A_t| = \left|\cup_{x \in A}\; \xi^x_t\right| \leq \sum_{x \in A}\left|\xi^x_t\right|$, we have
$$P_{\T,\lambda}\Ll[|\xi^A_t| \neq \varnothing\Rr] \leq E_{\T,\lambda}\Ll[|\xi^A_t|\Rr] \leq |A|\cdot M_t \leq C_0|A|e^{-c_0t}.$$
The proof is completed by taking $C = 2/c_0$.
\end{proof}


Part (a) of Theorem \ref{t:mainsup} is a consequence of the above proposition and the following result: 
\begin{lem}
\label{lem:stdom}
For any finite graph $G = (V,E)$ with degree bounded by $d+1$, $A \subset V$ and $t > 0$,
$$P_{G,\lambda}\Ll[\uptau^A_G > t\Rr] \leq \sup_{B \subset \T: |B| = |A|} P_{\T,\lambda}\Ll[\uptau^B_{\T} > t\Rr].$$
\end{lem}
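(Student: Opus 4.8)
The plan is to construct an explicit coupling between the contact process on a finite graph $G$ of degree bounded by $d+1$ and the contact process on the infinite $(d+1)$-regular tree $\T$, in such a way that at every time $t$ the tree process dominates the $G$-process in the sense that the infected set in $\T$ always has at least as many vertices as the infected set in $G$ — indeed, it will surject onto it. The natural tool is the \emph{universal cover}: there is a covering map $\pi : \T \to G$ (since $G$ has degree at most $d+1$, one can realize $G$, after possibly adding edges or using a tree of degree exactly $d+1$, as a quotient of $\T$; more precisely, the universal cover of $G$ is a tree of degree at most $d+1$, which embeds into $\T$). The key property of a covering map is that it is a local isomorphism: each vertex of $\T$ has a neighbourhood mapped bijectively onto a neighbourhood in $G$, so transmissions and recoveries can be transported across $\pi$.

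\textbf{Key steps.} First, I would make precise the covering/embedding: let $\widetilde G$ be the universal cover of $G$, a locally finite tree with all degrees $\le d+1$; since $\T$ is the $(d+1)$-regular tree, $(\,\cdot\,,\widetilde G)$ embeds into $(\,\cdot\,,\T)$ as a subtree (fix a root $\tilde\rho$ over some $\rho \in V$ and map it into $\T$ respecting adjacency, which is possible because at each vertex $\widetilde G$ uses at most $d+1$ edges and $\T$ has exactly $d+1$). Composing the covering map $\widetilde G \to G$ with the inverse of this embedding gives a map $\pi$ from a subtree $\T' \subset \T$ onto $G$ that is a graph covering onto its image. Second, I would build the graphical representation of the contact process on $\T$ and \emph{push it down} via $\pi$: a recovery mark at $(\tilde x, t)$ in $\T'$ becomes a recovery mark at $(\pi(\tilde x), t)$ in $G$, and a transmission arrow along an oriented edge $\vec{\tilde e}$ of $\T'$ becomes a transmission arrow along $\pi(\vec{\tilde e})$. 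One checks that the resulting marked picture on $G$ has the correct law (independent rate-$1$ recoveries at vertices, independent rate-$\lambda$ transmissions along oriented edges): this uses that $\pi$ is finite-to-one and that restricted to the star of each vertex it is a bijection onto the star of its image, so the push-forward of independent Poisson processes is again a family of independent Poisson processes with the right rates. Third, I would choose the initial set: given $A \subset V$ with $|A| = n$, pick any set $\widetilde A \subset \T'$ with $|\widetilde A| = n$ and $\pi(\widetilde A) = A$ (possible since $\pi$ is onto $G$ — or even onto each vertex infinitely often, so one can pick one preimage per point of $A$). Fourth, the path-lifting / path-projection property of covering maps gives: if $(\tilde x, 0) \leftrightarrow (\tilde y, t)$ in the $\T$-picture with $\tilde x \in \widetilde A$, then $(\pi(\tilde x), 0) \leftrightarrow (\pi(\tilde y), t)$ in the $G$-picture; hence $\pi(\xi^{\widetilde A, \T}_t) \supseteq \xi^{A,G}_t$, and therefore $|\xi^{A,G}_t| \le |\xi^{\widetilde A,\T}_t|$ for all $t$ simultaneously. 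In particular $\{\xi^{A,G}_t \ne \emptyset\} \subseteq \{\xi^{\widetilde A,\T}_t \ne \emptyset\}$, so $P_{G,\lambda}[\uptau^A_G > t] \le P_{\T,\lambda}[\uptau^{\widetilde A}_\T > t] \le \sup_{|B| = |A|} P_{\T,\lambda}[\uptau^B_\T > t]$, which is the claim.

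\textbf{Main obstacle.} The delicate point is Step 2 — verifying that pushing the graphical construction forward along $\pi$ produces a bona fide contact process on $G$ with the right rates, rather than something with correlated or overcounted marks. The issue is that $\pi$ is many-to-one, so a single vertex (or oriented edge) of $G$ receives marks from infinitely many fibres; one must restrict to the finite subtree whose image under $\pi$ is what actually matters, or alternatively argue directly that, because $\pi$ restricted to the star of each vertex of $\T'$ is injective and its image is the whole star in $G$, a single well-chosen lift (a ``section over a spanning tree of $G$'' together with consistent edge lifts) carries exactly one independent Poisson family per vertex/edge of $G$. Making this bookkeeping rigorous — choosing, for each edge of $G$, a single oriented edge of $\T'$ lying above it so that the push-forward hits each edge of $G$ exactly once while still being compatible with the adjacency needed for path-projection — is where the real work lies; the rest is the standard graphical-construction monotonicity and the elementary observation that a covering map projects infection paths to infection paths.
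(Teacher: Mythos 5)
Your overall strategy (universal cover, couple the $G$-process with a tree process, compare extinction times) is the same as the paper's, but the coupling you propose does not actually close, and the difficulty you flag as ``bookkeeping'' is the real mathematical obstruction. First, there is a directional slip in Step 4: projecting infection paths \emph{down} through $\pi$ gives $\pi\bigl(\xi^{\widetilde A,\T}_t\bigr) \subseteq \xi^{A,G}_t$, which is the wrong inclusion for your purposes; to bound $|\xi^{A,G}_t|$ by $|\xi^{\widetilde A,\T}_t|$ you need every infection path in $G$ starting from $A$ to \emph{lift} to an infection path in the tree starting from $\widetilde A$. Second, path lifting is exactly what fails under the coupling you describe. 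If you push forward \emph{all} marks from the tree, each vertex and edge of $G$ receives the superposition of the marks of its entire (generically infinite) fibre, so the rates downstairs are wrong. If instead you push forward the marks of a single chosen lift per vertex/edge (a ``section''), the law downstairs is correct, but no adjacency-consistent section exists as soon as $G$ contains a cycle (otherwise $G$ would embed in a tree): an infection path in $G$ that winds around a cycle leaves the chosen section after one loop, and the tree marks it would need at the lift it has actually reached are not the ones you transported. So under a static choice of lifts the lifted marks do not concatenate into an infection path upstairs, and the domination $|\xi^{A,G}_t| \le |\xi^{\widetilde A,\T}_t|$ is not established. Since the case where $G$ is a tree is trivial, this gap covers precisely the cases of interest.

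The paper's proof resolves this tension with a device you do not have: instead of fixing lifts in advance, it runs a \emph{constrained} contact process on the universal cover $\mathcal{T}$, in which at most one particle per fibre is allowed and any birth onto an already-occupied fibre is suppressed. Using the local bijection property of the covering map, the projection of this constrained process to $G$ is Markov and has exactly the law of the contact process on $G$ started from $\pi(B)$; in effect, the constrained process dynamically tracks which lift of each infected vertex of $G$ is ``active'', which is what a static section cannot do. The constrained process is then dominated, in the graphical construction on $\mathcal{T} \subset \T$, by the unconstrained contact process on $\T$ started from $B$ with $|B| = |A|$, and the lemma follows. To repair your argument you would need to introduce this (or an equivalent dynamic lifting rule) rather than a marks-pushforward along a fixed section.
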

\begin{proof}
Since the contact process is unaffected by the presence of loops (edges that start and end at the same vertex), we assume that $G$ has none.

We will now recall the concept of universal covering of the graph $G$; we will construct from $G$ a new graph $\mathcal{T} = (\mathcal{V},\mathcal{E})$ with certain desirable properties. We start fixing a reference vertex $x \in V$. We say that a sequence $\gamma = (\vec{e}_1,\ldots,\vec{e}_n)$ of oriented edges of $\vec{E}$ is a \textit{non-backtracking path from $x$} if $v_0(\vec{e}_1) = x$ and, for $1 \leq i < n $, $v_1(\vec{e}_i) = v_0(\vec{e}_{i+1})$ and $u(\vec{e}_i) \neq u(\vec{e}_{i+1})$ (recall that, for an oriented edge $\vec{e}$, $v_0(\vec{e}),\;v_1(\vec{e})$ and $u(\vec{e})$ respectively denote the starting vertex, ending vertex and undirected edge associated to $\vec{e}$). Let $\mathcal{V}$ be the set of all non-backtracking paths from $x$, including an empty path which we denote by $o$. For any $\gamma,\gamma'\in \mathcal{V}$ with $\gamma = (\vec{e}_1,\ldots, \vec{e}_n)$ and $\gamma' = (\vec{e}_1, \ldots, \vec{e}_n, \vec{e}_{n+1})$, we connect $\gamma$ and $\gamma'$ by an edge; this defines the edge set $\mathcal{E}$ of $\mathcal{T}$. Finally, put $\psi(o) = x$ and, for $\gamma = (\vec{e}_1,\ldots,\vec{e}_n)$, put $\psi(\gamma) = v_1(\vec{e}_n)$, the ending vertex of the path $\gamma$. It is now easy to check that $\mathcal{T}$ and $\psi$ satisfy the properties:
\begin{itemize}
\item[(a)] $\mathcal{T}$ is a tree with degree bounded by $d+ 1$;
\item[(b)] for every $\gamma \in \mathcal{V}$, $\psi$ maps the neighbourhood of $\gamma$ bijectively to the set
$$\{\vec{e}: \vec{e} \in \vec{E} \text{ has $\psi(\gamma)$ as its starting vertex}\}$$
\end{itemize}
(in case $\psi(\gamma)$ is connected to each of its neighbours by a single edge, property (b) just says that $\psi$ maps the neighbourhood of $\gamma$ bijectively to the neighbourhood of~$\psi(\gamma)$).

For $y \in V$, the set $\psi^{-1}(y)$ is called the \textit{fiber} of $y$. Define the set of configurations of $\{0,1\}^\mathcal{V}$ that have at most one particle per fiber,
$$\Omega_\mathcal{T} = \left\{\zeta \in \{0,1\}^\mathcal{V}: \sum_{\gamma \in \psi^{-1}(v)} \zeta(\gamma) \in \{0,1\}\right\}.$$
Define the projection $\pi: \Omega_\mathcal{T} \to \{0,1\}^V$ by $[\pi(\zeta)](v) = \sum_{\gamma \in \psi^{-1}(v)} \zeta(\gamma)$ for $v \in V$. We abuse notation and, for a set $B \subset \mathcal{V}$, we write $B \in \Omega_\mathcal{T}$ if $\1_B \in \Omega_\mathcal{T}$.

Given $B \in \Omega_\mathcal{T}$, we define the \textit{constrained contact process on $\mathcal{T}$}, $(\zeta^B_t)_{t\geq 0}$, as follows. We set $\zeta^B_0 = B$ and let $\zeta$ evolve as a contact process on $\mathcal{T}$ with the restriction that we suppress every transition which would result in a configuration not in $\Omega_\mathcal{T}$, that is, births on vertices belonging to fibers already containing infected vertices. Formally, $(\zeta_t)$ has generator
$$\mathcal{L}f(\zeta) = \sum_{\gamma \in \mathcal{V}} \Ll[f(\zeta^{0 \to \gamma}) - f(\zeta)\Rr] + \lambda \cdot \sum_{\substack{\{\gamma, \gamma'\} \in \mathcal{E}:\\\zeta(\gamma)=1}}\Ll[ \1_{\{\zeta^{1 \to \gamma'} \in \Omega_\mathcal{T}\}}\cdot \Ll(f(\zeta^{1 \to \gamma'}) - f(\zeta) \Rr)\Rr],$$
where $\zeta^{i \to \gamma'}$ is the configuration obtained by modifying $\zeta$ so that $\gamma'$ is set to state $i$. Noting that $G$ is finite, and hence $\zeta$ has at most $|V|$ infected vertices at any given time, it is not hard to see that the above generator indeed gives rise to a Feller process on $[0,\infty)^{\Omega_\mathcal{T}}$. In fact, $\zeta$ can be constructed from a Harris system on $\mathcal{T}$, with the extra care of ignoring any transmission mark which would cause a fiber to become doubly occupied. Also, using property (b) of $\mathcal{T}$ and $\psi$ stated above, it is easy to show that $(\pi(\zeta^B_t))_{t \geq 0}$ has the same distribution as the contact process on $G$ started from $\pi(B)$.

Now, if we start from $A \subset V$, we can choose an arbitrary $B \in \Omega_\mathcal{T}$ such that $\pi(B) = A$ and conclude that $\uptau^A_G$ has the same distribution as $\inf\{t:\zeta^B_t = \varnothing\}$. By seeing $\mathcal{T}$ (and hence $B$) as a subset of $\T$, we have that $\zeta^B$ is stochastically dominated by the contact process on $\T$ started from $B$ infected, and hence $\inf\{t: \zeta^B_t = \varnothing\}$ is stochastically dominated by $\uptau^B_\T$. This completes the proof.
\end{proof}

Let $G$ be a graph and $x$ a vertex of $G$. Given the contact process on $G$ started from a single infection at $x$, $(\xi^x_t)_{t\geq 0}$, define
$$\upkappa^x_G = \sup\{\mathsf{dist}(x,y):y \in \xi^x_t \text{ for some } t \geq 0\}.$$

\begin{lem}
For any finite graph $G$ with degree bounded by $d+1$, any vertex $x$ of $G$ and $k > 0$,
$$\mathbb{P}\Ll[\upkappa^x_G > k\Rr] \leq \mathbb{P}\Ll[\upkappa^o_\T > k\Rr].$$
\end{lem}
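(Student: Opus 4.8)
The plan is to reuse the universal-covering construction from the proof of Lemma~\ref{lem:stdom}. Fix the finite graph $G$ (with no loops, as before) and the vertex $x$, and build the universal covering tree $\mathcal{T} = (\mathcal{V},\mathcal{E})$ rooted at $o$, together with the covering map $\psi$ with $\psi(o) = x$. As established there, the constrained contact process $(\zeta^{\{o\}}_t)_{t\geq 0}$ on $\mathcal{T}$ started from the single site $o$ projects under $\pi$ to a process distributed as $(\xi^x_t)_{t\geq 0}$ on $G$, since $\pi(\{o\}) = \{x\}$.

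The key geometric observation is that $\psi$ is distance non-increasing: if $\gamma = (\vec e_1,\ldots,\vec e_n) \in \mathcal{V}$, then $\mathsf{dist}_{\mathcal{T}}(o,\gamma) = n$, while the sequence of oriented edges $\vec e_1,\ldots,\vec e_n$ traces a walk in $G$ from $x$ to $\psi(\gamma)$, so $\mathsf{dist}_G(x,\psi(\gamma)) \le n = \mathsf{dist}_{\mathcal{T}}(o,\gamma)$. Hence if $y \in \xi^x_t$ for some $t$, then $y = \psi(\gamma)$ for some $\gamma \in \zeta^{\{o\}}_t$ with $\mathsf{dist}_{\mathcal{T}}(o,\gamma) \ge \mathsf{dist}_G(x,y)$, which gives $\upkappa^x_G \le \kappa$ in the obvious notation for the analogous quantity of the constrained process on $\mathcal{T}$; in particular $\mathbb{P}[\upkappa^x_G > k] \le \mathbb{P}[\sup\{\mathsf{dist}_{\mathcal{T}}(o,\gamma): \gamma \in \zeta^{\{o\}}_t \text{ for some } t\} > k]$.

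Finally, exactly as in Lemma~\ref{lem:stdom}, view $(\mathcal{T},o)$ as a rooted subtree of $(\T,o)$ (possible since $\mathcal{T}$ has degree bounded by $d+1$ and is rooted at a vertex, so it embeds in $\T$ preserving the root and distances), and couple so that the constrained contact process $\zeta^{\{o\}}$ is stochastically dominated by the genuine contact process $\xi^o$ on $\T$ started from $o$: the constrained process suppresses some births, and any surviving infection in $\zeta$ is carried by an infection in $\xi^o$ at a $\T$-vertex at the same distance from $o$. Under this coupling every vertex reached by $\zeta^{\{o\}}$ lies in $\xi^o_t$ for the corresponding $t$ and at the same graph distance from $o$ (distances in $\mathcal{T}$ and in $\T$ agree on $\mathcal{V}$), so the range of $\zeta^{\{o\}}$ is bounded above by $\upkappa^o_\T$. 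Chaining the three inequalities yields $\mathbb{P}[\upkappa^x_G > k] \le \mathbb{P}[\upkappa^o_\T > k]$.

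The only mild subtlety — the analogue of the "main obstacle" here — is bookkeeping in the monotone coupling of the constrained process $\zeta$ with the free contact process on $\T$: one must check that the graphical-construction coupling respects distances to the root, i.e.\ that an infection in $\zeta$ at $\gamma$ is always matched by an infection in $\xi^o$ at a vertex at $\T$-distance $\mathsf{dist}_{\mathcal{T}}(o,\gamma)$ from the root. This follows because the embedding $\mathcal{T} \hookrightarrow \T$ is an isometry onto its image and the coupling only ever drops infections of $\zeta$, never moves them to other vertices; no new ideas beyond those already used for Lemma~\ref{lem:stdom} are needed.
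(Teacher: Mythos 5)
Your proposal is correct and follows essentially the same route as the paper: it reuses the universal covering construction and constrained process from Lemma~\ref{lem:stdom}, notes that $\psi$ does not increase distances so $\upkappa^x_G$ is dominated by the root-to-infection range of the constrained process on $\mathcal{T}$, and then dominates that by $\upkappa^o_\T$ via the embedding of $\mathcal{T}$ into $\T$. The extra bookkeeping you spell out (distances agreeing under the subtree embedding, the coupling only dropping infections) is exactly the content the paper leaves implicit.
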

\begin{proof}
Repeating the construction in the previous lemma, we note that, if $\psi(\gamma) = x$ and $\psi(\gamma') = y$, then $\mathsf{dist}_\mathcal{T}(\gamma, \gamma') \geq \mathsf{dist}_G(x,y)$, and we then see that $\upkappa^x_G$ is stochastically dominated by $\sup\{\mathsf{dist}_\mathcal{T}(\gamma,\gamma'): \gamma' \in \zeta^{\{\gamma\}}_t \text{ for some } t \geq 0 \}$. The latter is in turn stochastically dominated by $\kappa^o_\T$, completing the proof.
\end{proof}

\begin{proof}[Proof of Theorem \ref{t:univsub}.]
In order to show that the contact process on $G$ dies out, it suffices to show, for any vertex $x$, that $\P\Ll[\uptau^{x}_G < \infty\Rr] = 1.$ Denote by $B_G(x,r)$ the subgraph of $G$ induced by the set of vertices at graph distance less than $r$ from $x$. Then,
$$\P\Ll[\uptau^x_G < \infty\Rr] = \lim_{r \to \infty} \P\Ll[\upkappa^x_G \leq r\Rr] = \lim_{r \to \infty} \P\Ll[\upkappa^x_{B_G(x,2r)} \leq r\Rr] \geq \lim_{r \to \infty} \P \Ll[ \upkappa^x_\T \leq r\Rr] = 1.$$

\end{proof}

\end{document}